\documentclass[12pt,a4paper,reqno]{amsart}

\usepackage{amssymb,amsmath,graphicx}
\usepackage{fancyvrb}
\usepackage{booktabs}
\usepackage{pgf}
\usepackage{tikz}
\usetikzlibrary{fit}
\newcommand\addvmargin[1]{%
  \node[fit=(current bounding box),inner ysep=#1,inner xsep=0]{};
}
\usepackage{longtable}
\usepackage{enumerate}
\usepackage{comment}
\usetikzlibrary{arrows,automata}
\usetikzlibrary{decorations.pathreplacing}

\newtheorem{theorem}{Theorem}[section]
\newtheorem{lemma}[theorem]{Lemma}

\newtheorem{proposition}[theorem]{Proposition}

\theoremstyle{definition}
\newtheorem{definition}[theorem]{Definition}
\newtheorem{example}[theorem]{Example}
\newtheorem{examples}[theorem]{Examples}
\newtheorem{remark}[theorem]{Remark}

\newtheorem{notation}[theorem]{Notation}

\newcommand{\Jonq}{Jonqui\`eres}
\newcommand{\BB}{\mathcal{B}}
\newcommand{\PP}{\mathbb{P}}
\newcommand{\K}{\mathbb{K}}

\newcommand{\infnear}{\succ}
\newcommand{\prox}{\dasharrow}

\newcommand{\satel}{\odot}
\newcommand{\notsatel}{\hbox{$\mspace{5mu}\not\mspace{-5mu}\odot\mspace{6mu}$}}

\DeclareMathOperator{\Aut}{Aut}

\DeclareMathOperator{\Bir}{Bir}
\DeclareMathOperator{\PGL}{PGL}
\DeclareMathOperator{\oq}{oql}
\DeclareMathOperator{\q}{ql}
\DeclareMathOperator{\lgth}{lgth}

\DeclareMathOperator{\mult}{mult}
\DeclareMathOperator{\h}{h}

\numberwithin{table}{section}
\numberwithin{figure}{section}

\title[On plane Cremona maps of degree 4 and length 2]{On plane Cremona maps of degree 4 and length 2}
\author{Alberto Calabri$^{*}$}
\address{$^{*}$Dipartimento di Matematica e Informatica, Università di Ferrara, Via Machiavelli 30, 44121 Ferrara, Italy}
\email{clblrt@unife.it}

\author{Nguyen Thi Ngoc Giao$^{\dag \ddag}$}
\address{$^{\dag}$Department of Mathematics, Faculty of Science and Technology, Tokyo University of Science, 2641 Yamazaki, Noda, Chiba, 278-8510, Japan}
\address{$^{\ddag}$Faculty of Electrical Engineering, The University of Danang - University of Science and Technology, 54 Nguyen Luong Bang, Da Nang, Vietnam}
\email{ngocgiao185@gmail.com}

\begin{document}

\begin{abstract}
Plane Cremona maps of degree four are divided into two different classes according to their homaloidal type: either they are de \Jonq, that is, they have a base point of multiplicity 3, or they are not de \Jonq.
The maps of the former class have length one, while the maps of the latter class have length two, according to the definition of length of a plane Cremona map given by Blanc and Furter.

In this paper, we classify plane Cremona maps of degree four and length two, up to the action of automorphisms of the projective plane on the left-hand side and on the right-hand side of the map.
This classification also allows us to compute the ordinary quadratic length of all such maps.
In particular, we show that this ordinary quadratic length is at most 7 and we classify those maps which attain this upper bound.
\end{abstract}

\maketitle


\section{Introduction}

We work over an algebraically closed field $\K$ of characteristic zero. For convenience, we write $\K^{*}$ to represent $\K \setminus \{0\}$ and $\K^{**}$ to represent $\K \setminus \{0,1\}$.

We denote by $\PP^2$ the projective plane over $\K$ and by $\Bir(\PP^2)$ the \emph{plane Cremona group}, that is, the group of birational maps $\PP^2\dasharrow\PP^2$.
The Noether-Castelnuovo Theorem says that $\Bir(\PP^2)$ is generated by the automorphisms of $\PP^2$ and the quadratic transformation
\begin{equation}\label{sigma_eq}
\sigma\colon \PP^2 \dasharrow \PP^2,
\qquad [x : y : z] \mapsto [yz : xz : xy].
\end{equation}
Consequently, any plane Cremona map $\gamma$ can be decomposed as
\[
\gamma=\alpha_n \circ \sigma \circ \alpha_{n-1} \circ \sigma \circ \cdots \circ \alpha_1 \circ \sigma \circ \alpha_0,
\]
where $\alpha_0, \ldots, \alpha_n$ are automorphisms of the plane, and we can assume that $n$ is minimal under this property.
This $n$ is then called the \emph{ordinary quadratic length} of $\gamma$.
Recall that, in \cite{B-F}, Blanc and Furter defined and studied the \emph{length} of a plane Cremona map $\gamma$ as the minimum number of de \Jonq\ maps needed to decompose $\gamma$.

It is well known that plane Cremona maps of degree four are either de \Jonq\ and they have length one or not de \Jonq\ and they have length two.

In a very recent preprint \cite{CN2}, we introduced and studied the similar notion of \emph{quadratic length} of a plane Cremona map $\gamma$ of degree four, namely the minimal number of (not necessarily ordinary) quadratic maps needed to decompose $\gamma$.
In particular, we proved that plane Cremona maps of degree four and de Jonquières have quadratic length 3, 4, or 5, and we classified those maps that reach the upper bound.

On the other hand, in this paper, we deal with plane Cremona maps of degree four and length two, i.e., which are not de Jonquières, and their ordinary quadratic length.

Let us say that two plane Cremona maps $\varphi,\varphi'\colon\PP^2\dasharrow\PP^2$  are \emph{equivalent} if there exist two automorphisms $\alpha,\alpha'\in\Aut(\PP^2)$ such that
$
\varphi'=\alpha'\circ\varphi\circ\alpha.
$
The classification of equivalence classes of plane Cremona maps of degree two is very well known from the beginning of the study of plane Cremona maps more than one hundred years ago.

Nevertheless, a classification of equivalence classes of plane Cremona maps of degree three has been described only very recently; see \cite{C-D} and \cite{CN}.
In particular, in \cite{CN} we gave a fine classification of them according to their enriched weighted proximity graphs, that is a discrete invariant that encodes the proximity relations among the base points and the possible alignment of three simple base points: we found 31 different such graphs.
Furthermore, concerning their ordinary quadratic length, we proved the following theorem.

\begin{theorem}\label{thm3.1}
Let $\varphi\colon \PP^2 \dashrightarrow \PP^2$ be a plane Cremona map of degree three.
Then, the ordinary quadratic length of $\varphi$ varies between $2$ and $6$. In particular, $\varphi$ has ordinary quadratic length $6$ if and only if it is equivalent to the map
\begin{equation}\label{phi1}
\varphi_1\colon \PP^2 \dashrightarrow \PP^2,
\qquad
[x:y:z] \mapsto [xz^2 +y^3 : yz^2 : z^3].
\end{equation}
\end{theorem}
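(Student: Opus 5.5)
This theorem is recalled from \cite{CN}. Were we to reprove it, we would work through the classification of cubic plane Cremona maps by their enriched weighted proximity graphs. A cubic map has homaloidal type $(3;2,1^4)$: one double point $p_0$ and four simple base points $p_1,\dots,p_4$, possibly infinitely near, with some proximity and alignment configuration. Up to left and right automorphisms this gives the 31 graphs of \cite{CN}.

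\textbf{Lower bound.} Any map with ordinary quadratic length at most $1$ has degree at most $2$, so every cubic map needs at least $2$ quadratic maps. Composing $\sigma$ with a suitable quadratic map based at a point $p_0$ and two proper simple points already produces a cubic map, so the value $2$ is attained.

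\textbf{Upper bounds, case by case.} For each of the 31 graphs we exhibit an explicit decomposition. The basic step is to compose $\varphi$ with an ordinary quadratic map $\tau$ whose three base points are proper points among the base points of $\varphi$. If $\tau$ is centred at $p_0$ and two proper simple base points, the degree drops to $3\cdot 2-2-1-1=2$. If the configuration has too few proper points, we instead choose a $\tau$ that keeps the degree at $3$ but makes the configuration "less infinitely near". We then track how the proximity graph changes under $\tau$ and argue by induction on a measure of infinitely-nearness, for example the total height of the base points. The count needed is
\begin{itemize}
\item $1$ or $2$ ordinary maps to realise a degree-$2$ map, since a non-ordinary quadratic map itself has ordinary length $2$ or $3$, namely $\sigma$-type, $\rho$-type and $\tau$-type having lengths $1$, $2$, $4$ (the standard facts);
\item plus the steps used to reach degree $2$.
\end{itemize}
This shows that every graph gives length at most $6$. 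Among all graphs, only the maximally infinitely near one needs $6$: there $p_4\succ p_3\succ p_2\succ p_1\succ p_0$ is a chain with the appropriate proximities, realised by $\varphi_1$.

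\textbf{Main obstacle.} The hard step is the lower bound $6$ for $\varphi_1$, together with showing that no other map reaches $6$. For the lower bound we would argue by contradiction. Suppose $\varphi_1=\alpha_5\sigma\cdots\sigma\alpha_0$ with $5$ factors. Look at the sequence of degrees, and use that the first $\sigma$ must have all three base points among the base points of $\varphi_1$, or else the degree rises. Since $\varphi_1$ has only one proper base point, each ordinary step can reduce the number of infinitely-near levels only slowly. A counting invariant, such as the number of proper base points of the intermediate maps and of their inverses, would force at least $6$ steps.

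The "only if" direction follows from the case analysis: every other graph, and every map within the graph of $\varphi_1$ not equivalent to it, admits an explicit decomposition of length at most $5$. I expect this exhaustive verification, especially the graphs with long chains of infinitely near points, to be the bulk of the work.
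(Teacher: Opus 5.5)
\textbf{Gap: the lower bound for $\varphi_1$.} The gap is in the step you flag as the main obstacle, the lower bound $\oq(\varphi_1)\ge 6$, and the mechanism you sketch for it cannot work. A minimal decomposition may pass through maps of higher degree, and for $\varphi_1$ it is forced to. The only proper base point of $\varphi_1$ is the double point $p_0$; the simple base points form a chain $p_4\infnear p_3\infnear p_2\infnear p_1\infnear p_0$ with $p_2\satel p_0$. So every ordinary quadratic map has at most one centre among the base points of $\varphi_1$, and the first composite has degree at least $4$. An argument driven by the degree sequence therefore gives nothing here. The same goes for the requirement that the first $\sigma$ be centred at base points, which is not even necessary for the degree to stay $3$: centring at the double point, one simple base point and a further general point keeps degree $3$. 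The ``counting invariant'' you invoke is never defined, let alone shown to drop by at most one per step.

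\textbf{Missing idea: the height.} What is missing is the height $\h_\varphi$ of Section \ref{sec:height}. By Lemma \ref{lem:oq+-1} it changes by at most one under an ordinary quadratic map, which yields Proposition \ref{oq>height}. Since $\h_{\varphi_1}(p_4)=5$, you get $\oq(\varphi_1)\ge 5$ immediately.

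To exclude $5$, assume it. Then each factor must lower the maximal height by exactly one. This forces the first factor to be centred at $p_0$ and at two points $q,r$ off the line through $p_0$ in the direction of $p_1$; these $q,r$ are not base points. The composite is then a quartic of type $(4;3,1^6)$ whose chain starts at a proper point $p_1'$ of the line $qr$. Precisely because $p_2\satel p_0$, the chain continues with $p_2'$ in the direction of that line. So the second factor may not be centred at $q$ or $r$, which excludes the only choices that would bring the degree back to $3$. The surviving branches are closed by degree bounds of the kind in Lemma \ref{bounded_degree} and by lengths already known for smaller configurations. This is the method of \cite{CN}; its degree-four analogue is the proof for types 1.1 and 3.1 in Section \ref{sec:oql}.

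This also shows why ``maximally infinitely near'' is the wrong criterion. The chain with $p_2$ not satellite to $p_0$ has the same maximal height $5$, and there $q$ is allowed at the second step. It is the satellite proximity that singles out $\varphi_1$.

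\textbf{Upper bounds.} Your count of $1$ or $2$ ordinary maps per quadratic map contradicts $\oq(\tau)=4$, which you yourself quote. Moreover, decompositions must be produced for every equivalence class within each graph, using normal forms with parameters, not one per graph. A graph is only an invariant and may carry moduli.
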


In this paper we follow the same approach to study equivalence classes of plane Cremona maps of degree four and length two.

Firstly, we classify their possible enriched weighted proximity graph (cf.\ Theorem \ref{thm:123graphs}):  

\begin{theorem}\label{thm:intro1}
There are exactly 123 enriched weighted proximity graphs of plane Cremona maps of degree four and length two, up to isomorphism, listed in Table \ref{table4.3}.
\end{theorem}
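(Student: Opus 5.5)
The classification starts from the homaloidal types. A plane Cremona map of degree $4$ has homaloidal type $(4; m_1,\dots,m_r)$ satisfying the Noether equations $\sum m_i = 3(d-1)=9$ and $\sum m_i^2 = d^2-1 = 15$. Solving these gives only two types: $(4;3,1^6)$, which is de \Jonq, and $(4;2^3,1^3)$. Since length two means not de \Jonq, we work throughout with six base points $p_1,p_2,p_3$ of multiplicity $2$ and $p_4,p_5,p_6$ of multiplicity $1$, some of which may be infinitely near.

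The plan is to enumerate all admissible configurations of these six points, as in \cite{CN} for degree three. First we list every abstract proximity graph on the weighted points. Such a graph is a forest encoding infinitely near relations, together with proximity arrows, including satellite relations marked by $\satel$.

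Next we prune this list by the proximity inequalities. At each point $p$ we need $m_p \ge \sum_{q \prox p} m_q$. We also need the absence of unexpected curves in the fixed part of the homaloidal net. Concretely:
\begin{itemize}
\item no line through two double points and a third base point, since then $2+2+1>4$;
\item no line through three double points;
\item no conic through all six points, since then $2\cdot 3+3 = 9 > 8$;
\item more generally, every irreducible curve of degree $e$ has intersection with the net at least $0$, i.e.\ $4e \ge \sum m_i\,\mult_{p_i}$ of the curve;
\item every $(-2)$-curve configuration on the blow-up must be compatible.
\end{itemize}
In practice it is cleanest to use the characterization that the weighted cluster is homaloidal iff the blow-up $S$ of the six points is a weak del Pezzo-type surface on which the class $4H-2E_1-2E_2-2E_3-E_4-E_5-E_6$ is the pullback of a line under a birational morphism. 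Equivalently, the cluster is consistent and no effective $(-2)$-class has negative intersection with it. One then applies the classical trick: the quadratic map $\sigma$ centered at the three double points (when they are proper and not aligned, or its degenerate analogues) sends the type $(4;2^3,1^3)$ to $(2;1^3)$. Thus each configuration corresponds to a pair of a quadratic map's base cluster and three further points, and consistency can be checked via this reduction.

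The enrichment, meaning the alignment data of three simple points, is then added. We record when $p_4,p_5,p_6$ (or other triples allowed by the multiplicity count) are collinear. We check which alignments are permitted, for instance by exhibiting the line as having intersection exactly $0$ with the net, and which are forbidden.

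The final step is existence and distinctness. For each surviving enriched graph we write an explicit degree-$4$ map realizing it. The natural source is compositions $\sigma_2\circ\alpha\circ\sigma_1$ of two quadratic maps, or a cubic map from \cite{CN} composed with a quadratic map. This is because a non-de \Jonq\ quartic factors through quadratic maps in which the base points of one are chosen among those of the inverse of the other. We then verify pairwise non-isomorphism of the 123 graphs, which is combinatorial.

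I expect the main obstacle to be the exhaustive and correct enumeration-and-pruning step. That means producing all proximity structures on six weighted points, with satellites and multiple chains, and deciding exactly which satisfy homaloidality, without omissions or duplicates. I would organize it by the number of proper points and the infinitely near structure of the three double points, reducing each case via the quadratic map to the known degree-$2$ situation, and I would cross-check the count by computer.
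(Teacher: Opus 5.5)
Your skeleton matches the paper's proof. You enumerate admissible weighted graphs with weights $2,2,2,1,1,1$ obeying the proximity inequalities (the paper gets 53). You enrich them with lines through triples of weights $(1,1,1)$ or $(2,1,1)$, subject to the B\'ezout restrictions and Remark~\ref{rem:line2}. You then realize each of the resulting 123 enriched graphs by an explicit map (Table~\ref{table4.4}) and check pairwise non-isomorphism. In this scheme no sufficiency criterion for homaloidality is needed: necessary conditions cut out the candidate list, and the explicit maps show that every candidate occurs.

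The step that would actually go wrong is the ``characterization'' you propose to prune with. It is false that the blow-up $S$ of the six base points must be a weak del Pezzo surface. Admissible graphs in the list can have a vertex of indegree $2$, coming either from two simple points proximate to a double point or from a satellite point. Such a vertex gives a curve of self-intersection $-3$ on $S$ on which $-K_S$ is negative.

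For example, in type 1.1 the point $p_5=(p_1,0,-1,0,\infty)$ is proximate to both $p_4$ and $p_3$. So the strict transform of the exceptional curve over $p_3$ has class $E_3-E_4-E_5$, and $-K_S\cdot(E_3-E_4-E_5)=-1$. Nevertheless 1.1 is realized by the first map of Table~\ref{table4.4}, and it is one of the three maps with $\oq=7$. Pruning by your criterion would delete every such graph, graph 1 (the only one with six arrows) among them, so you would undercount.

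Drop that criterion and prune only with genuinely necessary conditions: admissibility, the proximity inequalities, and $4\deg C\ge\sum_i m_i\mult_{p_i}(C)$ for lines and conics $C$. The test for an allowed line is non-negativity, not intersection exactly $0$. A line through three simple points meets the net in $1$ and is allowed.

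If you want an independent sufficiency check, reducing by a quadratic map based at the three double points is fine in principle. However, when the double points are infinitely near, that map is of type $\rho$ or $\tau$. You must then track how simple points infinitely near them, like $p_4,p_5,p_6$ in type 1.1, are transported.
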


Then, for each one of the 123 graphs, say $G$, we show all equivalence classes of plane Cremona maps that have $G$ as enriched weighted proximity graph.

This classification of equivalence classes of plane Cremona maps of degree four and length two will allow us to prove the following theorem.

\begin{theorem}\label{thm:intro2}
Let $\psi\colon \PP^2 \dashrightarrow \PP^2$ be a plane Cremona map of degree four and length two.
Then, the ordinary quadratic length of $\psi$ varies between $2$ and $7$. In particular, $\psi$ has ordinary quadratic length $7$ if and only if it is equivalent to one, and only one, of the following three maps:
\begin{align}
\psi_{1.1} &\colon \PP^2 \dashrightarrow \PP^2,
&&
[x:y:z] \mapsto [z^{2} (x z +y^{2}): x^{2} z^{2}+2 x y^{2} z +y^{4}+y z^{3} : z^{4}]; \\
\psi_{3.1} &\colon \PP^2 \dashrightarrow \PP^2,
&&
[x:y:z] \mapsto [(x z +y^{2})^{2} : z^{2} (x z +y^{2}) : z (x y z +y^{3}+z^{3})]; \\
\psi_{16.2}&\colon \PP^2 \dashrightarrow \PP^2,
&&[x:y:z] \mapsto [yz(xz+y^2) : y^2z^2 : x^2z^2+2xy^2z+y^4+yz^3 ].
\end{align}
\end{theorem}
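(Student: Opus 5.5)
The plan follows the strategy used for cubic maps in Theorem \ref{thm3.1}: work graph by graph through the classification of Theorem \ref{thm:intro1}, and bound the ordinary quadratic length of each equivalence class from above and below. A degree four map of length two has homaloidal type $(4;2^6)$, i.e.\ six double base points. The ordinary quadratic length is invariant under equivalence. So for each of the 123 enriched weighted proximity graphs of Table \ref{table4.3}, and each equivalence class with that graph, it suffices to bound the length of one normal form.

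\textbf{Lower bound $2$.} No degree four map is a composition of at most one ordinary quadratic map with automorphisms, since such compositions have degree at most $2$. Hence the length is at least $2$. The bound is attained: when the six double points are proper points in general position, composing with $\sigma$ based at three of them drops the degree to $4\cdot 2-2-2-2=2$. The result is an ordinary quadratic map provided its three base points are proper and non-collinear, which we check in a general configuration.

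\textbf{Upper bounds.} I would argue by descending degree. If three proper, non-collinear double base points exist, composing with an ordinary quadratic map there gives degree $2$. When there are infinitely near points, we pick an ordinary quadratic map whose base points include as many base points of $\psi$ as possible, counted with multiplicity. Write $m_1\ge m_2\ge m_3$ for the multiplicities of $\psi$ at the three chosen points, with $m_i=0$ for points that are not base points of $\psi$. The new degree is $8-m_1-m_2-m_3$. We then land on a map of degree $4$, $3$ or $2$. For degree three we invoke Theorem \ref{thm3.1}, knowing its exact graph from the proximity data. For degree four we recurse on a graph with more proper points. Done carefully for each graph, this gives an explicit decomposition and hence an upper bound; we expect every graph other than the ones yielding $\psi_{1.1},\psi_{3.1},\psi_{16.2}$ to give length at most $6$. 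The worst cases are the graphs where all base points lie in a single chain of infinitely near points, which are the graphs of those three maps. For them, the first quadratic map can absorb only one proper base point, so the degree does not drop and we reach a degree four map with a less degenerate graph. Following this through, we expect to reach $\varphi_1$ of \eqref{phi1} or a degree three map of length $5$, giving length $7$.

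\textbf{Lower bound $7$ for the extremal maps.} This is the main obstacle. We must show that $\psi_{1.1},\psi_{3.1},\psi_{16.2}$ cannot be written with $6$ ordinary quadratic maps. Suppose $\psi=\tau_6\circ\cdots\circ\tau_1$ up to automorphisms. The first factor $\tau_1$ has three proper non-collinear base points. Since $\psi$ has only one proper base point, or base points in very special position, the composition $\psi\circ\tau_1^{-1}$ has degree $8-m_1-m_2-m_3\ge 4$ with $m_1\le 2$ and $m_2=m_3=0$, so its degree is at least $6$, or at least $4$ in the borderline cases. I would enumerate the possible positions of the base points of $\tau_1$ relative to the proximity graph of $\psi$: the unique proper base point, general points, and points on the line or conic through the chain. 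Each such position determines the graph of $\psi\circ\tau_1^{-1}$. Using the classification, we then show every resulting map has ordinary quadratic length at least $6$: by Theorem \ref{thm3.1} in degree three, by the degree four classification already established, and by a degree estimate for degree $\ge 5$, where each ordinary quadratic map lowers the degree by at most a bounded amount. Handling the degree $\ge 5$ intermediate maps is where I expect the most delicate case analysis, likely via a Noether-type inequality $m_1+m_2+m_3>d$.

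\textbf{Distinctness of the three maps.} Finally, $\psi_{1.1},\psi_{3.1},\psi_{16.2}$ are pairwise inequivalent because their enriched weighted proximity graphs differ: they are graphs 1, 3 and 16 of Table \ref{table4.3}, and the graph is invariant under equivalence. Within each of these graphs, the classification shows a unique equivalence class attains length $7$.
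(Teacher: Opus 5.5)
Your proposal has genuine gaps: the setup is wrong in ways that change the case analysis, and the lower bound lacks its key tool. First, a quartic map of length two has homaloidal type $(4;2^3,1^3)$, with three double and three simple base points. The type $(4;2^6)$ violates both relations in \eqref{eq:Cremona}, since $24\neq 15$ and $12\neq 9$. For $\psi_{1.1}$ and $\psi_{3.1}$ the six base points form a single chain over the unique proper double point $p_1$. A first ordinary quadratic factor can therefore absorb only $p_1$, so $\varepsilon=2-4=-2$ and the degree \emph{rises} to $6$; you do not ``reach a degree four map with a less degenerate graph''. More importantly, $\psi_{16.2}$ is not a chain configuration. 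Its base points lie over two proper points, $[1:0:0]$ carrying the three double points and $[0:0:1]$ carrying the three simple ones, so its maximal height is only $3$. Its length $7$ comes from the fact that type 16.2 is the inverse of type 3.1 and $\oq(\varphi^{-1})=\oq(\varphi)$: reverse a decomposition, $\sigma$ being an involution. Your plan never uses inversion and gives no argument for the lower bound of $\psi_{16.2}$.

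The decisive gap is the lower bound $\oq\ge 7$ itself. You propose to show $\oq(\psi\circ\tau_1^{-1})\ge 6$ for every first factor $\tau_1$, using Theorem~\ref{thm3.1}, the quartic classification and degree estimates. But $\tau_1$ ranges over an infinite family, and the maps $\psi\circ\tau_1^{-1}$ have degree up to $8$, outside every classification available to you. Degree gives almost nothing here. A composition of $n$ quadratic maps has degree at most $2^n$, so degree alone yields only $\oq\ge\log_2\deg$, i.e.\ at most $3$ for these maps. The Noether inequality produces degree-lowering factors, which is an upper-bound tool.

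The missing idea is the height. Proposition~\ref{oq>height} gives $\oq(\varphi)\ge\max_p\h_\varphi(p)$, and by Lemma~\ref{lem:oq+-1} one factor changes heights by at most one. For $\psi_{1.1}$ and $\psi_{3.1}$ one has $\h(p_6)=6$, so $\oq\ge 6$. If $\oq=6$, every factor of a minimal decomposition must lower the top height, which pins down the factors:
\begin{itemize}
\item $\rho_1$ must be based at $p_1$ and not on the line through $p_1,p_2$, giving a determined degree-$6$ graph;
\item then $\rho_2$ must be based at $p'_3$ and off the line $p'_3p'_4$, and so on.
\end{itemize}
The few remaining branches are closed by the chain lemma and by earlier lemmas. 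The chain lemma (Lemma~\ref{bounded_degree}) says a chain $p_1,\dots,p_r$ with $\oq=r$ forces $m_r=1$ and $\deg\le m_1+\cdots+m_r+1$. The earlier lemmas give $\oq=4$ for types 37.1, 37.2 and $\oq=5$ for types 10.1, 12.1, proved by the same height method.

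Finally, your upper bounds are only ``expected'': $\oq\le 7$ for all 123 types, and $\oq\le 6$ for every class outside 1.1, 3.1, 16.2 (which is the ``only if'' direction). A greedy degree-lowering choice is not shown to achieve them; the paper obtains them from explicit decompositions, as in Table~\ref{table3}.
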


Let us briefly explain the contents of this paper.

In Section \ref{sec:prel} we recall the definition and properties of infinitely near points and we review the definition of standard coordinates that we introduced in \cite{CN} to describe them.
In Section \ref{sec:plane_Cremona_maps} we recall the basic definitions regarding plane Cremona maps, their homaloidal type and their length and ordinary quadratic length.
In Section \ref{sec:weighted} we define and classify enriched weighted proximity graphs of plane Cremona maps of degree 4 and length 2.
In Section \ref{sec:classification} we give a classification of plane Cremona maps of
degree 4 and length 2, up to automorphisms.
In Section \ref{sec:height} we recall the definition of the height of a point with respect to a plane Cremona map $\gamma$ and we relate it with the ordinary quadratic length of $\gamma$.
In Section \ref{sec:oql} we prove Theorem \ref{thm:intro2}.

\section{Notation, preliminaries and infinitely near points}
\label{sec:prel}

Let us recall the definition of the bubble space of $\PP^2$, useful to define infinitely near points and related properties.

\begin{definition}[{cf.\ \cite[\S7.3.2]{Dolgachev}, \cite[Chapter 5, \S35]{M}}]
Let us denote by $\BB(\PP^2)$ the so-called \emph{bubble space} of $\PP^2$, which is defined as follows.
Consider all surfaces $X$ \emph{above} $\PP^2$, i.e.\ all surfaces $X$ such that there exists a birational morphism $X\to\PP^2$.
If $X_1,X_2$ are two surfaces above $\PP^2$, say $\pi_1\colon X_1\to\PP^2$ and $\pi_2\colon X_2\to\PP^2$ are birational morphisms,
one identifies $p_1\in X_1$ with $p_2\in X_2$ if the birational map $(\pi_2)^{-1}\circ \pi_1\colon X_1\dasharrow X_2$ is a local isomorphism at $p_1$ that sends $p_1$ to $p_2$.
The bubble space $\BB(\PP^2)$ is the union of all points of all surfaces above $\PP^2$ modulo the equivalence relation generated by these identifications.

For any birational morphism $X\to\PP^2$, there is an injective map $X\to\BB(\PP^2)$, therefore we will identify points of $X$ with their images in $\BB(\PP^2)$.

One says that $p_1\in\BB(\PP^2)$ is \emph{infinitely near} $p_2\in\BB(\PP^2)$, say $p_1\in X_1$ and $p_2\in X_2$, with birational morphisms $\pi_1\colon X_1\to\PP^2$ and $\pi_2\colon X_2\to\PP^2$, if the birational map $(\pi_2)^{-1}\circ\pi_1\colon X_1\dasharrow X_2$ is defined at $p_1$, sends $p_1$ to $p_2$, but is not a local isomorphism at $p_1$. In such a case we write that $p_1 \infnear p_2$.

One moreover says that $p_1$ is \emph{in the first neighbourhood} of $p_2$, or that $p_1$ is \emph{infinitely near $p_2$ of the first order}, if $(\pi_2)^{-1}\circ\pi_1$ corresponds locally to the blow-up of $p_2$.
In such a case we write that $p_1 \infnear_1 p_2$.

If $p_1 \infnear p_2$ then one defines the \emph{infinitesimal order} of $p_1$ with respect to $p_2$ by induction, namely if $p_1 \infnear_1 p_3$ and $p_3 \infnear_k p_2$ for some $k$, then $p_1$ is \emph{infinitely near $p_2$ of order $k+1$}. 

If $p_1\infnear p_2$ and $p_1\in X_1$, then there is a unique irreducible curve $E_2\subset X_1$ which corresponds to the exceptional curve of the blowing-up of $p_2\in X_2$. One says that $p_1$ is \emph{proximate} to $p_2$ if $p_1\in E_2$. In such a case we write that $p_1\prox p_2$.

If $p_1\prox p_2$ and $p_1\infnear_k p_2$ with $k >1$, then we say that $p_1$ is \emph{satellite} to $p_2$ and we write $p_1 \satel p_2$. Otherwise, if $p_1$ is not satellite to $p_2$, then we write that $p_1 \notsatel p_2$.

One says that a point $p\in\PP^2\subset\BB(\PP^2)$ is a \emph{proper point of $\PP^2$}.

If $p_1\infnear p_2 \in \PP^2$ where $p_1 \in X_1$ and $\pi_1 : X_1 \rightarrow \PP^2$ is a birational morphism, we say that a plane curve $C$ passes through $p_1$ if $C$ passes through $p_2$ and the strict transform of $C$ on $X_1$ via $\pi_1$ passes through $p_1$.
\end{definition}

\begin{proposition}[Proximity inequality]\label{pro Proximity_inequalities} Let $\varphi\colon S \rightarrow \PP^2$ be a birational morphism, that is, the composition of the blowing-ups at single points, namely $\varphi=\pi_1\circ\pi_2\circ\cdots\circ\pi_{r-1}\circ\pi_r$, where $\pi_i\colon S_i\to S_{i-1}$ is the blowing up at a point $p_i\in S_{i-1}$, $i=1,\ldots,r$, with $\PP^2=S_0$ and $S=S_r$.
Let $C$ be a plane curve and let $C_i$ be the strict transform of $C$ in $S_i$, $i=1,\ldots,r$. Setting $C_0 =C$ and $m_i = \mult_{p_i}(C_{i-1})$, $i=1, \ldots,r$, one has, for each $j = 1, \ldots, r$, 
\[
m_j \geq \sum_{p_k\prox p_j} m_k.
\]
\end{proposition}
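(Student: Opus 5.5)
We prove the proximity inequality by comparing two expressions for the total transform of $C$ on $S$, namely pullback of divisors and intersection numbers on the blown-up surfaces. For each $i$ let $E_i\subset S_i$ be the exceptional curve of $\pi_i$, and let $\tilde E_i\subset S$ be its strict transform on $S$. We write the total transform of $E_i$ in $S$ as $E_i^*=(\pi_{i+1}\circ\cdots\circ\pi_r)^*E_i$. By the standard formula $\pi_k^*D=\tilde D+\mult_{p_k}(D)E_k$ for a blow-up at a point, applied inductively, we get
\[
E_j^*=\tilde E_j+\sum_{p_k\prox p_j}E_k^*.
\]
The reason is that a later point $p_k$ lies on the strict transform of $E_j$ in $S_{k-1}$ exactly when $p_k\prox p_j$. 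In that case the multiplicity there is $1$, because the strict transform of an exceptional curve is smooth.

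Similarly, the strict transform $C_r$ of $C$ on $S$ satisfies
\[
\varphi^*C=C_r+\sum_{i=1}^r m_iE_i^*.
\]
We use the standard intersection numbers on $S$: $E_i^*\cdot E_k^*=-\delta_{ik}$, and $E_i^*\cdot\varphi^*C=0$. These follow from the projection formula, since the $E_i^*$ are pullbacks of exceptional curves contracted to points. They give $C_r\cdot E_j^*=m_j$.

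We now intersect $C_r$ with the expression for $E_j^*$:
\[
m_j=C_r\cdot E_j^*=C_r\cdot\tilde E_j+\sum_{p_k\prox p_j}C_r\cdot E_k^*=C_r\cdot\tilde E_j+\sum_{p_k\prox p_j}m_k.
\]
Finally, $C_r$ and $\tilde E_j$ are distinct irreducible components, or more precisely $\tilde E_j$ is not a component of $C_r$, since $C_r$ has no exceptional components. Their intersection number is therefore nonnegative, which yields $m_j\ge\sum_{p_k\prox p_j}m_k$.

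The main point requiring care is the decomposition of $E_j^*$. We verify it by induction on $r$. At step $k$, pulling back via $\pi_k$ adds $E_k$ with coefficient equal to the multiplicity at $p_k$ of the current transform of $E_j$. The strict-transform part contributes $1$ exactly when $p_k\prox p_j$. The parts $E_l^*$ already present contribute $E_k^*$ in the next stage through the pullback of $E_l^*$, which we interpret as keeping total transforms. Once this bookkeeping is written in terms of total transforms, the rest is formal.
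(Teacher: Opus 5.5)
Your proof is correct. It is the standard argument behind the references the paper cites in place of a proof: writing the total transform as $E_j^*=\tilde E_j+\sum_{p_k\prox p_j}E_k^*$ and intersecting with $C_r$ identifies $m_j-\sum_{p_k\prox p_j}m_k$ with $C_r\cdot\tilde E_j\ge 0$, which is the ``excess'' of $C$ at $p_j$.
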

\begin{proof}
See $\S2.2$ in \cite{MAC} or Theorem $3.5.3$, Corollary $3.5.4$ in \cite{ECA}.
\end{proof}

We recall the \emph{standard coordinates} of infinitely near points that we introduced in \cite{CN}.

Let $p_1=[a:b:c] \in \PP^2$. Let us consider three cases:
\begin{itemize}
\item[$(i)$] if $c \neq 0$, then $p_1 =\bigg[\dfrac{a}{c}:\dfrac{b}{c}:1\bigg] = [\overline{a}:\overline{b}:1]$;
\item[$(ii)$] if $c = 0$ and $b \neq 0$, then   $p_1 =\bigg[\dfrac{a}{b}:1:0\bigg] = [\overline{a}:1:0]$;
\item[$(iii)$] if $c=b= 0$, then $p_1 = [1:0:0]$.
\end{itemize}

In case $(i)$, we work on the affine chart $U_2 \simeq \K^2_{\overline{x},\overline{y}}$, so that $p_1$ corresponds to the point $\overline{p}_1 = (\overline{a},\overline{b})$, and we define the isomorphism $\alpha_1 \colon \K^2_{\overline{x},\overline{y}} \rightarrow \K^2_{x_0,y_0}$ by
$$\alpha_1(\overline{x},\overline{y}) = (\overline{x}-\overline{a},\overline{y}-\overline{b}).$$

In case $(ii)$, we work on the affine chart $U_1 \simeq \K^2_{\overline{x},\overline{z}}$, so that $p_1$ corresponds to the point $\overline{p}_1 = (\overline{a},0)$, and we define the isomorphism $\alpha_1 \colon \K^2_{\overline{x},\overline{z}} \rightarrow \K^2_{x_0,y_0}$ by
$$\alpha_1(\overline{x},\overline{z}) = (\overline{x}-\overline{a},\overline{z}).$$

In case $(iii)$, we work on the affine chart $U_0 \simeq \K^2_{\overline{y},\overline{z}}$, so that $p_1$ corresponds to the point $\bar p_1=(0,0)$, and we define the isomorphism $\alpha_1 \colon \K^2_{\overline{y},\overline{z}} \rightarrow \K^2_{x_0,y_0}$ by
$$\alpha_1(\overline{y},\overline{z}) = (\overline{y},\overline{z}).$$

In all three cases, we defined $\alpha_1$ in such a way that $\alpha_1(\overline{p}_1) = (0,0)\in \K^2_{x_0,y_0}$.

We blow-up $\K^2_{x_0,y_0}$ at $(0,0)$ and we consider the first chart $\K^2_{x_1,y_1}$ where the blowing-up map is given in coordinates by $x_0=x_1, y_0=x_1y_1$.

In this chart, the exceptional curve $E_1$ has local equation $x_1=0$, hence a point $p_2 \infnear_1 p_1$ corresponds either to the point $(0,t_2) \in E_1$ with $t_2 \in \K$ or to the point which is the origin of the second chart.
In the former case, let us say that $p_2$ has standard coordinates $p_2=(p_1,t_2)$, while in the latter case let us say that $p_2$ has standard coordinates $p_2=(p_1,\infty)$.
Setting $\PP^1=\K\cup\{\infty\}$, let us denote the standard coordinates of $p_2$  by $p_2=(p_1,t_2)$ with $t_2\in\PP^1$.

We go on by blowing-up at $p_2=(p_1,t_2)$, with $t_2\in\PP^1=\K\cup\{\infty\}$.
Either $t_2\in\K$ or $t_2=\infty$. In the former case, with notation as above, let $\alpha_2 \colon \K^2_{x_1,y_1} \rightarrow \K^2_{\bar x_1,\bar y_1}$ be the isomorphism defined by
$$\alpha_2(x_1,y_1) = (x_1, y_1-t_2).$$
In the latter case, $p_2$ corresponds to the origin of the second chart of the blowing-up of $\K^2_{x_0,y_0}$ at $(0,0)$ that we write  $\K^2_{x'_1,y'_1}$, where the blowing-up map is given by $x_0=x'_1 y'_1, y_0=y'_1$. 
Let $\alpha_2 \colon \K^2_{x'_1,y'_1} \rightarrow \K^2_{\bar x_1,\bar y_1}$ be the isomorphism
$$\alpha_2(x'_1,y'_1) = (y'_1, x'_1).$$
In this way, in both cases, in $\K^2_{\bar x_1,\bar y_1}$ the exceptional curve $E_1$ has local equation $\bar x_1=0$ and the point $p_2$ corresponds to the origin $(0,0)$.

We blow-up $\K^2_{\bar x_1,\bar y_1}$ at $(0,0)$ and we consider the first chart $\K^2_{x_2,y_2}$ where the blowing-up map is given in coordinates by $\bar x_1=x_2, \bar y_1=x_2y_2$.
In this chart, the exceptional curve $E_2$ has local equation $x_2=0$, hence a point $p_3 \infnear_1 p_2$ corresponds either to the point $(0,t_3) \in E_2$ with $t_3 \in \K$ or to the point which is the origin of the second chart.

Let us say that $p_3$ has standard coordinates $p_3=(p_1,t_2,t_3)$, where either $t_3\in\K$ in the former case or $t_3=\infty$ in the latter case.

Note that the strict transform of $E_1$ can be seen only in the second chart and it meets $E_2$ at the origin of the second chart.
In other words, the point with standard coordinates $(p_1,t_2,\infty)$ is satellite to $p_1$.

More generally, let us proceed by induction on the infinitesimal order.
Suppose that we have blown-up the point $p_{r-1}$ with standard coordinates $p_{r-1}=(p_1,t_2,\ldots,t_{r-1})$, with $t_i\in\PP^1=\K\cup\{\infty\}$, $i=2,\ldots,r-1$.
Following the procedure described above, we may assume that $p_{r-1}$ is the origin of a chart $\K^2_{\bar x_{r-1},\bar y_{r-1}}$ in such a way that the exceptional curve $E_{r-1}$ has local equation $\bar x_{r-1}=0$.

In the first chart of the blowing up of $\K^2_{\bar x_{r-1},\bar y_{r-1}}$ at $(0,0)$, given in coordinates by $\bar x_{r-1}=x_r, \bar y_{r-1}=x_ry_r$, the exceptional curve $E_r$ has local equation $x_r=0$, hence a point $p_r \infnear_1 p_{r-1}$ corresponds either to the point $(0, t_r) \in E_r$ with $t_r\in\K$ or to the point which is the origin of the second chart, given in coordinates by $\bar x_{r-1}=x_ry_r, \bar y_{r-1}=y_r$.

Let us say that $p_r$ has standard coordinates $p_r=(p_1,t_2,\ldots,t_r)$, where $t_r\in\K$ in the former case and $t_r=\infty$ in the latter case.

In such a way, we are able to define the standard coordinates for each point of $\BB(\PP^2)$ as given by a point in $\PP^2$ and a sequence of points of $\PP^1$.

\begin{remark}\label{rm no_conic1}\label{rek no_conic}
If $p_1 \in \mathbb{P}^2, p_3 \infnear_1 p_2 \infnear_1 p_1$ and $p_3 \odot p_1$, i.e.\ $p_3 \dashrightarrow p_1$, then there is no smooth plane curve passing through ($p_1$, $p_2$ and) $p_3$, because of the proximity inequality at $p_1$.

For the same reason, if $p_1 \in \mathbb{P}^2$, $p_2 \infnear_1 p_1$, $p_3 \infnear_1 p_1$ and $p_2 \neq p_3$, then there is no smooth plane curve passing through ($p_1$ and) $p_2$ and $p_3$.

Moreover, if $p_1 \in \mathbb{P}^2, p_3 \infnear_1 p_2 \infnear_1 p_1$ and $p_1,p_2,p_3$ are collinear, namely $p_3$ lies on the strict transform of the line passing through $p_1$ and $p_2$, then there is no irreducible conic passing through $p_1,p_2,p_3$.
\end{remark}

The following lemma has been proved in \cite{CN}.

\begin{lemma}
Suppose that $p_1, p_2, p_3, p_4, p_5 \in \BB(\PP^2)$ are such that no three of them are collinear and either
\begin{itemize}
\item $p_1,p_2,p_3,p_4,p_5 \in \PP^2$; or
\item $p_1,p_2,p_3,p_4 \in \PP^2$ and $p_5 \infnear_1 p_1$; or
\item $p_1,p_2,p_3 \in \PP^2$, $p_5 \infnear_1 p_4 \infnear_1 p_1$ and $p_5 \notsatel p_1$; or
\item $p_1,p_2,p_3 \in \PP^2$, $p_4 \infnear_1 p_1$, and $p_5 \infnear_1 p_2$; or
\item $p_1,p_2 \in \PP^2$, $p_5 \infnear_1 p_3 \infnear_1 p_1$, $p_4 \infnear_1 p_2$ and $p_5 \notsatel p_1$; or
\item $p_1,p_2 \in \PP^2$, $p_5 \infnear_1 p_4 \infnear_1 p_3 \infnear_1 p_1$, $p_4 \notsatel p_1$, and $p_5 \notsatel p_3$; or
\item $p_5 \infnear_1 p_4 \infnear_1 p_3 \infnear_1 p_2 \infnear_1 p_1 \in \PP^2$, $p_3 \notsatel p_1$, $p_4 \notsatel p_2$, and $p_5 \notsatel p_3$.
\end{itemize}
Then, there exists a unique irreducible plane conic passing through $p_1,\ldots, p_5$.
\end{lemma}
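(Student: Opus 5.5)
The plan is to prove existence and uniqueness together, by linear conditions plus an irreducibility check. Plane conics form a $\PP^5$. Passing through a proper point, or through an infinitely near point $p_j \infnear_1 p_i$ once the conic already passes through $p_i$, imposes one linear condition on the coefficients. This is because the strict transform passing through $p_j$ is a linear condition on the tangent data, as long as multiplicities one are intended. So the conics through $p_1,\ldots,p_5$ (as a cluster with all multiplicities one) form a linear system of dimension at least $0$, and some conic $C$ passes through all five points.

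For uniqueness and irreducibility, I will first show that every conic $C$ through the five points is irreducible. Suppose instead $C=L_1+L_2$ is a pair of lines, possibly a double line. Each of the points $p_1,\ldots,p_5$ (counted in the bubble space) lies on $L_1$ or on $L_2$, in the appropriate sense for infinitely near points. An infinitely near point $p_j \infnear_1 p_i$ lies on $L_k$ only if $L_k$ passes through $p_i$ with tangent direction $p_j$. A point can also be ``absorbed'' at the singular point $L_1\cap L_2$, which has multiplicity $2$. Since a line contains at most two of the points in each configuration (no three collinear), the counting shows that five points cannot be covered. Points at the node are the delicate part: if $L_1\cap L_2 = p_i$, then both $p_j\infnear_1 p_i$ directions are covered. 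In the listed configurations, however, each point has at most one first-order successor. Moreover, chains $p_5\infnear_1 p_4\infnear_1 p_3$ with $p_5\notsatel p_3$ cannot lie on a line pair through a node, because a line pair at $p_3$ has only two smooth branches, and $p_4$ lies on one of them. Then $p_5$ lies on that same branch, which is a line through $p_3,p_4,p_5$, contradicting non-collinearity. I will go through the seven configurations case by case with this branch argument.

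Given irreducibility, uniqueness follows: if two distinct conics passed through the points, the pencil they span would contain a reducible member (a pencil of conics always has a singular member, since the discriminant is a cubic in the pencil parameter). That member still passes through all five points, which is a contradiction. Alternatively, Bezout's theorem with intersection multiplicities in the bubble space gives $5>4$.

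The main obstacle is the case analysis for degenerate conics in the deep chain cases (the last two configurations). There I will use the standard coordinates to write the conic locally and check that the non-satellite hypotheses $p_3\notsatel p_1$, $p_4\notsatel p_2$, $p_5\notsatel p_3$ are exactly what prevents the linear conditions from forcing a line pair or a double line. If the synthetic branch argument gets messy, I will fall back on choosing coordinates with $p_1=[0:0:1]$ and the tangent along $y=0$. I will then solve explicitly for the conic $y=a x^2+\dots$, and verify that the coefficient giving irreducibility is nonzero precisely under the non-collinearity and non-satellite assumptions.
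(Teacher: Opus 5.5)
The paper does not prove this lemma; it only quotes it from the authors' earlier work on cubic maps. Judged on its own, your argument has a genuine gap in the irreducibility step.

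The five linear conditions in your dimension count are \emph{virtual} conditions. Let $\pi\colon S\to\PP^2$ be the blow-up of $p_1,\ldots,p_5$, with $E_i$ the total transform of the $i$-th exceptional curve and $H$ the class of a line. The conditions cut out the conics $C$ such that $\pi^*C-\sum_i E_i$ is effective. The condition ``the strict transform passes through $p_j$'' is not linear, because of the conics singular at $p_i$. So the conic produced by your count is only known to pass virtually.

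Your covering/branch argument reasons with strict transforms of the line components, and for virtual passage it fails:
\begin{itemize}
\item A line pair with node at $p_i$ contains virtually the whole first neighbourhood of $p_i$, not just two directions.
\item If one of its lines points towards $p_j\infnear_1 p_i$, it also contains virtually the whole first neighbourhood of $p_j$. So ``$p_4$ lies on one branch, hence $p_5$ lies on the same branch'' does not follow.
\item In the last configuration, let $T$ be the line through $p_1$ in the direction of $p_2$. The double line $2T$ satisfies the conditions imposed by $p_1,\ldots,p_4$, although its strict transform contains neither $p_3$ nor $p_4$. Only $p_5\notsatel p_3$ excludes it.
\end{itemize}

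A clear symptom is that your argument never really uses the non-satellite hypotheses, yet the lemma is false without them. In the third configuration with $p_5\satel p_1$, the only conic satisfying the five conditions is $\overline{p_1p_2}+\overline{p_1p_3}$. Its virtual transform is the sum of the strict transforms of the two lines and of the exceptional curve of $p_1$. By Remark~\ref{rm no_conic1}, no irreducible conic passes through the points. Your fallback coordinate computation is reserved for the last two configurations, but the problem already appears in the third and fifth.

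A uniform way to close the gap is a class computation on $S$.
\begin{itemize}
\item Suppose a member $C$ of the system is a line pair or a double line. Write $\pi^*C-\sum_iE_i=\tilde L_1+\tilde L_2+\sum_k a_k\tilde E_k$ with $a_k\ge0$. Here $\tilde L_1,\tilde L_2$ are the strict transforms of the lines (equal for a double line) and $\tilde E_k$ is the strict transform of the $k$-th exceptional curve.
\item Then $\tilde L_j\sim H-\sum_{i\in I_j}E_i$ with $|I_j|\le2$, since no three points are collinear. Also $\tilde E_k\sim E_k-\sum_{p_j\prox p_k}E_j$.
\item Apply the linear map $dH-\sum_ic_iE_i\mapsto\sum_ic_i$. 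The left-hand side gives $5$, each $\tilde L_j$ gives at most $2$, and $\tilde E_k$ gives $\#\{j: p_j\prox p_k\}-1$.
\item In each of the seven configurations, the listed non-satellite hypotheses are precisely what guarantees that every $p_k$ has at most one of the other points proximate to it. So the last terms are $\le0$, giving $5\le4$, a contradiction.
\end{itemize}
Hence every member of the system is irreducible, hence smooth, so virtual passage coincides with actual passage. Your pencil argument (or Noether's formula $C\cdot C'\ge5>4$) then gives uniqueness.
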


\section{Plane Cremona maps and their lengths}\label{sec:plane_Cremona_maps}

A plane Cremona map $\varphi\colon \PP^2 \dasharrow \PP^2$ can be written as
\begin{equation*}
\varphi( [x : y : z] ) = [f_0(x, y, z) : f_1(x, y, z) : f_2(x, y, z)]
\end{equation*}
where $f_i\in\K[x,y,z]$, $i=0,1,2$, are homogeneous polynomials of the same degree, say $d$, that is called the \emph{degree} of $\varphi$ if $f_0, f_1, f_2$ have no common factor.

Plane Cremona maps of degree 1 are (linear) automorphisms of $\PP^2$, i.e.\ elements of $\Aut(\PP^2)\cong\PGL(3, \K)$.
Plane Cremona maps of degree 2 (3, 4, resp.) are called \emph{quadratic} (\emph{cubic}, \emph{quartic}, resp.).

\begin{definition}
Let us say that  two plane Cremona maps $\varphi,\varphi'\colon\PP^2\dasharrow\PP^2$  are \emph{equivalent} if there exist two automorphisms $\alpha,\alpha'\in\Aut(\PP^2)$ such that
\[
\varphi'=\alpha'\circ\varphi\circ\alpha.
\]
\end{definition}

\begin{remark}
It is very well-known that quadratic Cremona maps are equivalent to one and only one of the following three maps: $\sigma$, $\rho$ and $\tau$; where $\sigma$ is given in \eqref{sigma_eq},
\begin{equation}\label{rhotau}
\rho([x:y:z])=[xy : z^2 : yz];
\qquad
\tau([x:y:z])=[x^2 : xy : y^2-xz].
\end{equation}
Recall that a plane Cremona map of degree two is called:
\begin{itemize}
    \item \emph{ordinary} (or \emph{of the first type}) if it is equivalent to $\sigma$;
    \item \emph{of the second type} if it is equivalent to $\rho$;
    \item \emph{of the third type} if it is equivalent to $\tau$.
\end{itemize}
\end{remark}

Recall that the linear system defining a plane Cremona map $\varphi$ of degree $d$ has base points $p_1,\ldots,p_r\in\BB(\PP^2)$, of respective multiplicity $m_1,\ldots,m_r$,
that satisfy the following equations:
\begin{equation}\label{eq:Cremona}
d^2-1=\sum_{i=1}^r m_i^2,
\qquad\qquad
3(d-1)=\sum_{i=1}^r m_i.
\end{equation}
One says that $(d;m_1,\ldots,m_r)$ is the homaloidal type of $\varphi$.

A plane Cremona map is called \emph{de Jonqui\`eres} if it has degree $d$ and a base point of multiplicity $d-1$, i.e.\ its homaloidal type is $(d;d-1,1,\ldots,1)=(d;d-1,1^{2d-2})$.

According to the Noether-Castelnuovo Theorem, any plane Cremona map $\varphi\colon\PP^2\dasharrow\PP^2$ can be written
\begin{equation}\label{eq:decomp}
\varphi = \alpha_n \circ \sigma \circ \alpha_{n-1} \circ \sigma \circ \cdots \circ \alpha_1 \circ \sigma \circ \alpha_0
\end{equation}
where $\alpha_i\in\Aut(\PP^2)$, $i=0,\ldots,n$, for some integer $n$.

\begin{definition}
Let us call \eqref{eq:decomp} a \emph{decomposition} of $\varphi$.
Let us say that a decomposition \eqref{eq:decomp} is \emph{minimal} if $n$ is minimal among all decompositions of $\varphi$.
Let us call such $n$ the \emph{ordinary quadratic length} of $\varphi$ and let us denote it by $\oq(\varphi)$.
\end{definition}

Equivalently, the ordinary quadratic length of a plane Cremona map $\varphi$ of degree $\geq2$ is the minimum $n$ such that there exist ordinary quadratic maps $\psi_1,\psi_2,\ldots,\psi_n$ with
\begin{equation}\label{eq:decomp2}
\varphi=\psi_n\circ\psi_{n-1}\circ \cdots \circ \psi_2\circ \psi_1.
\end{equation}

\begin{definition}
Let us call the \emph{quadratic length} of plane Cremona map $\varphi$ the minimum $n$ such that there exists a decomposition \eqref{eq:decomp2} where $\psi_i$ is a (not necessarily ordinary) quadratic map, for each $i=1,\ldots,n$, and denote it by $\q(\varphi)$.
\end{definition}

Recall that Blanc and Furter in \cite{B-F} defined the \emph{length} of a plane Cremona map $\varphi$ as the minimum $n$ such that there exists a decomposition \eqref{eq:decomp2} where $\psi_i$ is a de Jonqui\`eres map, for each $i=1,\ldots,n$, and denoted it by $\lgth(\varphi)$.
Clearly, one has that
\begin{equation*}
\lgth(\varphi) \leq \q(\varphi) \leq \oq(\varphi).
\end{equation*}
Clearly, two equivalent plane Cremona maps have the same length, quadratic length and ordinary quadratic length.

\begin{examples}
The quadratic maps $\rho$ and $\tau$ defined in \eqref{rhotau} trivially have $\q(\rho)=\q(\tau)=1$, whereas it is well known that $\oq(\rho)=2$ and $\oq(\tau)=4$, cf.\ \cite{C-D}, \cite{CN}.
\end{examples}

Note that Equations \eqref{eq:Cremona} imply that plane Cremona maps of degree two and three are de Jonqui\`eres, whereas those of degree four either are de Jonqui\`eres or their homaloidal type is $(4;2,2,2,1,1,1)=(4;2^3,1^3)$.
In the former case, they have length one, while in the latter case, they have length two. In this paper we deal with those of length two.

\section{Enriched weighted proximity graphs}\label{sec:weighted}

\begin{definition}\label{def weighted_proximity_graph}
Let $\varphi$ be a plane Cremona map. Let us associate to $\varphi$ a \emph{weighted digraph} $G_\varphi$, called the \emph{weighted proximity graph of (the base points of) $\varphi$}, defined as follows:
\begin{itemize}
\item the vertices of $G_\varphi$ are the base points $p_1,\ldots,p_r \in \BB(\PP^2)$ of $\varphi$;
\item there is an arrow $p_i \to p_j$ if and only if $p_i$ is proximate to $p_j$;
\item each vertex $p_i$ is weighted with the multiplicity $m_i=\mult_{p_i}(\varphi)$ of $\varphi$ at $p_i$.
\end{itemize}
\end{definition}

It is easy to check that (weighted) proximity graphs have the property of being \emph{admissible}, according to the following definition.

\begin{definition}
Let us say that a digraph is \emph{admissible} if it is acyclic and satisfies the following three properties:
\begin{enumerate}[$(i)$]
\item each vertex has outdegree at most 2;
\item if a vertex $u$ has outdegree 2, say $u \to v$ and $u \to w$, then either $v \to w$ or $w \to v$;
\item fixing two vertices $v$ and $w$, there exists at most one vertex $u$ such that $u \to v$ and $u \to w$.
\end{enumerate}
\end{definition}

\begin{remark}\label{rem:weighted1}
Weighted proximity graphs satisfy proximity inequalities \ref{pro Proximity_inequalities}, namely the weight $m_j$ of a vertex $p_j$ is such that
\[
m_j \geq \sum_{p_i \to p_j} m_i
\]
where $m_i$ is the weight of the vertex $p_i$.
\end{remark}

\begin{example}
Let $\sigma$, $\rho$ and $\tau$ be the quadratic maps defined in \eqref{sigma_eq} and \eqref{rhotau}. Their respective proximity graphs $G_\sigma$, $G_\rho$ and $G_\tau$ are:
\begin{center}
$G_\sigma$ =
\scalebox{0.4}{%
\begin{tikzpicture}[->,>=stealth',shorten >=2pt,auto,node distance=2.5cm,ultra thick,baseline=-1.25ex]
  \tikzstyle{every state}=[fill=white,draw=black,text=black]
  \node[state,draw=red,text=red] (A)                    {\huge 1};
  \node[state,draw=red,text=red] (B) [right of=A] {\huge 1};
  \node[state,draw=red,text=red] (C) [right of=B] {\huge 1};
  \end{tikzpicture}%
} \qquad
$G_\rho$ =
\scalebox{0.4}{%
\begin{tikzpicture}[->,>=stealth',shorten >=2pt,auto,node distance=2.5cm,ultra thick,baseline=-1.25ex]
  \tikzstyle{every state}=[fill=white,draw=black,text=black]

  \node[state,draw=red,text=red] (A)                    {\huge 1};
  \node[state] (B) [right of=A] {\huge 1};
  \node[state,draw=red,text=red] (C) [right of=B] {\huge 1};
  \path (B) edge (A);
\end{tikzpicture}%
} \qquad
$G_\tau$ =
\scalebox{0.4}{%
\begin{tikzpicture}[->,>=stealth',shorten >=2pt,auto,node distance=2.5cm,ultra thick,baseline=-1.25ex]
  \tikzstyle{every state}=[fill=white,draw=black,text=black]

  \node[state,draw=red,text=red] (A)                    {\huge 1};
  \node[state] (B) [right of=A] {\huge 1};
  \node[state] (C) [right of=B] {\huge 1};

\path (C) edge (B);
\path (B) edge (A);
\end{tikzpicture}%
}
\end{center}
\end{example}

\begin{notation}
When we draw the weighted proximity graph of a plane Cremona map $\varphi$, for readers' convenience we write in \textcolor{red}{red} the vertices corresponding to \emph{proper base points} and in \textcolor{black}{black} the vertices corresponding to \emph{infinitely near points}.
\end{notation}

\begin{remark}
Clearly, two equivalent plane Cremona maps have the same weighted proximity graph. The converse is true for quadratic maps but it is not true for cubic, or higher degree, maps, cf.\ \cite{CN}. 
\end{remark}

In \cite{CN} it has been proved that there are exactly 21 weighted proximity graphs of plane Cremona maps of degree three.

We want to classify weighted proximity graphs of plane Cremona maps of degree four and length two.
Note that they have six vertices: three of them with weight 2 and the other three vertices of weight 1.

\begin{theorem}\label{thm:graphsndJ}
There are exactly 53 admissible graphs having six vertices with weights respectively $2, 2, 2, 1, 1, 1$, up to isomorphism, that are listed in Table \ref{table4.2a}.
\end{theorem}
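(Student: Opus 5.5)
The plan is a direct combinatorial enumeration, organised by the sub-digraph on the three vertices of weight $2$. I read ``admissible graphs having weights $2,2,2,1,1,1$'' as admissible weighted digraphs that also satisfy the proximity inequalities of Remark \ref{rem:weighted1}; without these inequalities the count would be far larger than $53$.

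The first step gives structural constraints from the inequalities $m_j \ge \sum_{p_i\to p_j} m_i$.
\begin{itemize}
\item A vertex of weight $1$ can receive at most one arrow, and that arrow comes from a vertex of weight $1$.
\item Consequently no vertex of weight $2$ points to a vertex of weight $1$. So the set $V_2$ of weight-$2$ vertices is closed under outgoing arrows.
\item A vertex of weight $2$ receives either at most one arrow from $V_2$ and nothing else, or only arrows from weight-$1$ vertices, at most two of them.
\end{itemize}
Combining these with acyclicity and conditions $(i)$--$(iii)$, the possible sub-digraphs on $V_2$ are:
\begin{itemize}
\item no arrows;
\item a single arrow $2\to 2$;
\item a chain $2\to2\to2$.
\end{itemize}
Any other configuration is excluded. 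Two arrows into one vertex would force it to have $2\ge 4$. A vertex with outdegree $2$ inside $V_2$ forces, by $(ii)$, a second arrow into the target of the chain, which again violates the inequality.

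The second step attaches the weight-$1$ vertices $q_1,q_2,q_3$ in each of the three cases. Each $q_i$ has outdegree $0$, $1$ or $2$. Its targets may be weight-$2$ vertices with remaining capacity, or weight-$1$ vertices not yet receiving an arrow. When the outdegree is $2$, the two targets must themselves be joined by an arrow, by $(ii)$, and $(iii)$ must hold. I would organise the count by the number of arrows among the $q_i$: none, a single arrow $q\to q'$, or a chain $q\to q'\to q''$, possibly with the satellite arrow $q''\to q$ if the inequality at $q$ permits. For each such configuration I then distribute the arrows from the $q_i$ into $V_2$, respecting capacities (the total weight entering a vertex is at most its own weight) and condition $(ii)$. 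A typical instance of $(ii)$: if $q\to v$ and $q\to w$ with $v,w\in V_2$, then $v\to w$ or $w\to v$ must be an existing arrow.

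The third step reduces modulo isomorphism, that is, weight-preserving digraph isomorphism. I would fix canonical labellings within each case and use the symmetries of the $V_2$-part: $S_3$ when there are no arrows, a swap of the two isolated-or-source vertices in the one-arrow case, and trivial in the chain case. Representatives are listed only once. The main obstacle is this bookkeeping: making sure every configuration is produced exactly once across cases and none is silently excluded by $(ii)$ or $(iii)$.

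To guard against errors, I would cross-check the hand enumeration by a short computer search over all weighted digraphs on six vertices. The search filters by acyclicity, $(i)$--$(iii)$ and the proximity inequalities, and then counts orbits under the $3!\cdot3!$ weight-preserving relabellings. It should confirm the total of $53$ and match Table \ref{table4.2a}.
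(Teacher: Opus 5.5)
Your reading of ``admissible'' as including the proximity inequalities matches Step~5 of the paper's algorithm, and your three cases for the subgraph on $V_2$ are correct. The paper derives the same degree constraints and then grows the lists $L_N$ one arrow at a time with isomorphism pruning, finding $1,3,9,16,16,7,1$ graphs with $0,\dots,6$ arrows. Splitting by the $V_2$-part instead is a genuinely different organisation, and it can be finished entirely by hand. However, the hand enumeration as you specify it contains errors that would corrupt the count.

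(1) In the one-arrow case there is no symmetry exchanging $a$ and $c$. A weight-preserving isomorphism restricts to automorphisms of the induced digraphs on $V_2$ and on $V_1$, and the digraph consisting of $a\to b$ and the isolated vertex $c$ has trivial automorphism group. Using that swap would merge, e.g., $\{a\to b,\ x\to a\}$ and $\{a\to b,\ x\to c\}$, which are two distinct members of $L_2$. Likewise, on the weight-$1$ side you may only use $\Aut$ of the $V_1$-part, which is $S_3$ only when there are no arrows among the $q_i$.

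(2) Your ``satellite arrow $q''\to q$'' closes the cycle $q\to q'\to q''\to q$, and the inequality at $q$ does not forbid it. The actual satellite arrow would be $q\to q''$, and it is always excluded: $q''$ has weight $1$ and would receive weight $2$.

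(3) Your ``typical instance'' of $(ii)$ never occurs. If $q\to v$, $q\to w$ and $v\to w$ in $V_2$, then $w$ receives incoming weight $3$.

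The observation you are missing, which makes the count clean, is this. A weight-$1$ vertex sends at most one arrow into $V_1$ and at most one into $V_2$. If it has outdegree $2$, its targets are some $q'\in V_1$ and $v\in V_2$ with $q'\to v$. Consequently:
\begin{itemize}
\item The $V_1$-part is also a disjoint union of directed paths: no arrow, $x\to y$, or $x\to y\to z$.
\item The arrows $V_1\to V_2$ form a partial map $f\colon V_1\to V_2$ with fibres of size $\le 2$.
\item $f$ takes values only at weight-$2$ vertices having no incoming arrow from $V_2$.
\item $f(q')=f(q)$ whenever $q\to q'$ and $f(q)$ is defined.
\end{itemize}
Condition $(iii)$ is then automatic. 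Count such $f$ modulo $\Aut(V_2\text{-part})\times\Aut(V_1\text{-part})$, with sub-cases no arrow, one arrow, or a chain in $V_1$. This gives $6+7+3=16$, $8+13+5=26$ and $3+5+3=11$ graphs in your three $V_2$-cases, hence $53$. The arrow counts are distributed as $1,3,9,16,16,7,1$, exactly the paper's $L_0,\dots,L_6$.

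Your brute-force search over all six-vertex digraphs modulo $S_3\times S_3$ is also sound. Unlike the paper's incremental search, it does not rely on the unstated fact that every valid graph with $N+1$ arrows arises from a valid one with $N$ arrows. That fact is true, because deleting an arrow out of a vertex of indegree $0$ preserves acyclicity, $(i)$--$(iii)$ and the inequalities.
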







\begin{proof}
The proximity inequalities \ref{rem:weighted1} imply that
\begin{itemize}
\item the indegree of a vertex with weight 1 is at most one;
\item the indegree of a vertex $u$ with weight 2 is at most two; if it is two, it means that the two arrows ending in $u$ are starting from two vertices of weight 1;
\item the outdegree of a vertex $u$ with weight 2 is at most one; if it is one, the arrow starting from $u$ must end in a vertex of weight 2;
\item the outdegree of a vertex $v$ with weight 1 is at most two.
\end{itemize}

Taking into account these properties, we want to list all weighted admissible graphs according to the number of arrows of the graph.

Let us describe the construction of the graphs in an algorithmic way.

\begin{itemize}
\item[Step 1:] set the number $N$ of arrows to be 0;
\item[Step 2:] let $L_N$ be the list of graphs with $N$ arrows;
\item[Step 3:] define an empty list $L'$;
\item[Step 4:] for each graph $G$ in $L_N$, let $G'$ be a graph obtained from $G$ by adding one arrow;
\item[Step 5:] check if $G'$ is admissible and it satisfies the proximity inequalities;
\item[Step 6:] if the answer to Step 5 is affirmative, check if $G'$ is isomorphic to any graph in the list $L'$;
\item[Step 7:] if the answer to Step 6 is negative, add $G'$ to the list $L'$;
\item[Step 8:] repeat Steps 4, 5, 6 and 7 for all possible arrows;
\item[Step 9:] increase $N$ by 1, set $L_N=L'$ and, if $N \leq 6$ go back to Step 3.
\end{itemize}

In Step 2, we set $L_0$ to be the list with only the weighted admissible graph with no arrow, that is number 53 in Table \ref{table4.2a}.

Running the algorithm, one finds that the list $L_1$ of graphs with exactly one arrow contains the graphs 50, 51 and 52.
Then, the list $L_2$ contains the graphs 37, 38, 43, 44, 45, 46, 47, 48 and 49.
Hence, the list $L_3$ contains the graphs 21, 23, 24, 25, 27, 28, 30, 31, 32, 34, 35, 36, 39, 40, 41 and 42.
The list $L_4$ contains the graphs 5, 6, 8, 9, 11, 12, 13, 15, 16, 17, 19, 20, 22, 26, 29 and 33.
The list $L_5$ contains the graphs 2, 3, 4, 7, 10, 14 and 18.
Finally, the list $L_6$ contains only graph 1.
\end{proof}

Recall that in \cite{CN} we enriched the weighted proximity graphs of plane Cremona maps of degree three by adding the line passing through three base points:
we found 31 non-isomorphic enriched weighted proximity graphs of plane Cremona maps of degree three.

\begin{remark}\label{rem:line}
Recall that the general member of the linear system of plane curves defining a plane Cremona map $\varphi$ is irreducible. Hence, B\'ezout Theorem implies that, if $\varphi$ has degree four and length two, then:
\begin{itemize}
  \item there is no conic through the six base points of $\varphi$;
  \item there is no line through four base points of $\varphi$;
  \item there is no line through the three base points of multiplicity 2 of $\varphi$;
  \item there is no line through two base points of multiplicity 2 and another base point.
\end{itemize}
\end{remark}

The previous remark implies that we may enrich a weighted admissible graph with information about lines through three vertices, where either all vertices have weight 1 or one vertex has weight 2 and the other two vertices have weight 1.

\begin{remark}\label{rem:line2}
The proximity inequalities imply that, if a line $\ell$ passes through a vertex $j$ and there is an arrow from the vertex $j$ to the vertex $i$, then the line $\ell$ must pass also through the vertex $i$.
\end{remark}

\begin{definition}
Let us add to the weighted proximity graph $G_\varphi$ of a plane Cremona map $\varphi$ of degree four and length two the list of lines passing through three base points of $\varphi$.
Let us call this object the \emph{enriched weighted proximity graph} of $\varphi$.
\end{definition}

In next theorem we classify all possible enriched weighted admissible graphs, up to isomorphism.
In Table \ref{table4.3}, there is the list of 123 enriched weighted admissible graphs. Each graph is listed as $n.m$, where $n$ is the number of the weighted admissible graph in Table \ref{table4.2a} and $m\geq1$ is such that the case $m=1$ corresponds to the graph with no line through the vertices, while each case with $m\geq2$ has one or more lines marked with $(i, j, k)$, if there is a line passing through the vertices $p_i, p_j, p_k$, where $\{i,j,k\}\subset\{1,2,\ldots,6\}$ and the vertices $p_1,\ldots,p_6$ are listed from the left hand side to the right hand side.

We will then show that each enriched weighted admissible graph is realized by some plane Cremona map of degree 4 and length 2.
More precisely, one finds that all plane Cremona maps with a given enriched weighted proximity graph depend on the number of parameters listed in the last column of Table \ref{table4.3}.

The fourth column of Table \ref{table4.3} in the row $n.m$ lists the type of the plane Cremona map that is the inverse of type $n.m$.

The third column of Table \ref{table4.3} lists the ordinary quadratic length of plane Cremona maps of that type, that is studied in Section \ref{sec:oql}.

\begin{theorem}\label{thm:123graphs}
There are exactly 123 enriched weighted proximity graphs of plane Cremona maps of degree four and length two, up to isomorphism, listed in Table \ref{table4.3}.
\end{theorem}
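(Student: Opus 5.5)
The plan has two halves: an upper bound by enumeration, then realizability of each candidate.

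For the upper bound, I start from the 53 weighted admissible graphs of Theorem \ref{thm:graphsndJ}. For each graph I list every way of adding lines through three vertices that is compatible with the constraints already established. By Remark \ref{rem:line}, a line through three base points contains either three vertices of weight 1, or one vertex of weight 2 and two of weight 1. By Remark \ref{rem:line2}, the set of vertices on a line is closed under following arrows. So a line through a vertex $j$ with an arrow $j\to i$ must contain $i$; in particular, a line through a weight-1 vertex proximate to a weight-2 vertex must contain that weight-2 vertex.

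Further constraints prune the list:
\begin{itemize}
\item Two distinct lines meet in at most one point, so two marked triples share at most one vertex. Since there are only three weight-1 vertices, any two triples share exactly one weight-1 vertex.
\item No line passes through four base points.
\item Remark \ref{rek no_conic} rules out lines through chains $p_3\infnear_1 p_2\infnear_1 p_1$ with $p_3\satel p_1$. Such a chain has no smooth curve through it, so a marked line cannot contain all three points.
\item If three weight-1 points lie on a line $\ell$, together with one weight-2 point on another line $\ell'$, then $\ell+\ell'$ is a conic. This conic must not pass through all six base points, by Remark \ref{rem:line}, with multiplicities counted appropriately. This excludes certain configurations of two lines.
\end{itemize}
Modulo graph automorphisms preserving weights and arrows, this leaves a finite list. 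I would generate it algorithmically, in the same spirit as the proof of Theorem \ref{thm:graphsndJ}. For each of the 53 graphs, I enumerate the admissible subsets of triples, test the conditions above, and reduce by isomorphism. The expected outcome is the 123 entries $n.m$ of Table \ref{table4.3}.

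For realizability, the plan is constructive. For each of the 123 enriched graphs, I place base points in standard coordinates, as recalled in Section \ref{sec:prel}, realizing the prescribed proximities and collinearities and no others. I then compute the linear system of quartics with multiplicity 2 at the three double points and 1 at the simple points. I check that it is a net without fixed components and with no further base points, and that the resulting map is birational of degree 4. The numerics \eqref{eq:Cremona} guarantee birationality once the system is a homaloidal net with exactly these base points. Concretely, it is enough to exhibit one explicit map for each type, as will be done in Section \ref{sec:classification}. A convenient shortcut is to compose suitable quadratic maps with the cubic maps classified in \cite{CN}.

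I expect the main obstacle to be the second half, more than the enumeration. For each surviving configuration one must verify that the chosen points are in general position apart from the prescribed collinearities. That means checking that no extra line through three points appears and that no conic passes through all six. One must also check that the quartic net really has these base points, so that no configuration is killed by a hidden obstruction. This is also where a candidate graph could turn out not to be realizable and be discarded. The verification is long but mechanical, so I would perform it with computer algebra, case by case.
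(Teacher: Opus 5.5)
Your proposal follows essentially the same route as the paper: for each of the 53 graphs of Theorem~\ref{thm:graphsndJ}, enumerate the triples (and then pairs of lines) allowed by Remarks~\ref{rem:line} and~\ref{rem:line2} up to graph automorphism, and prove realizability by exhibiting one explicit map per type, as in Table~\ref{table4.4}. When pruning, apply the proximity inequality to the line itself (all multiplicities $1$), since this also forbids a line through a point and two distinct points of its first neighbourhood (second part of Remark~\ref{rek no_conic}, e.g.\ $p_4,p_5\infnear_1 p_3$ in type 44), which your satellite-chain rule misses; your reducible-conic condition is redundant, as it adds nothing beyond Remark~\ref{rem:line}.
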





\begin{proof}
In Table \ref{table4.3}, we set the enriched weighted admissible graph of type $n.1$ as the graph of type $n$ in Table \ref{table4.2a} with no line.
Then, we add a line through three vertices, according to Remarks \ref{rem:line} and \ref{rem:line2}.
For example, this implies that the weighted admissible graphs of type $n=1,\ldots,10$ cannot have any line through three points.

Moreover, it follows that only type 53 may have three different lines and there is a unique enriched weighted admissible graph of type 53 with three lines, up to isomorphism.

In order to list all possible enriched weighted admissible graphs of type $n.m$, $m\geq 2$, we follow an algorithm.

\begin{itemize}
\item[Step 1:] let $G_n$ be the weighted admissible graph of type $n$ in Table \ref{table4.2a}.
\item[Step 2:] for all subsets $\{i,j,k\}$ with $i < j < k$ of $\{1,2,\ldots,6\}$, check if the vertices $p_i, p_j, p_k$ satisfy the conditions in Remarks \ref{rem:line} and \ref{rem:line2}.
\item[Step 3:] if the answer to Step 2 is affirmative, check if two different subsets $\{i,j,k\}$ and $\{i',j',k'\}$ are such that there is an automorphism of the graph sending one subset into the other one.
\item[Step 4:] answering to Step 3, we are able to list all enriched weighted admissible graphs of type $n$ with one line.
\item[Step 5:] if it is possible, add another line to the graphs obtained in Step 4 and check if two such enriched graphs are isomorphic.
\item[Step 6:] answering to Step 5, we are able to list all enriched weighted admissible graphs of type $n$ with two lines.
\end{itemize}


It remains to show that, for each enriched weighted admissible graph $G$ in Table \ref{table4.3}, there exists a plane Cremona map with $G$ as enriched weighted proximity graph.
This is done in next Table \ref{table4.4}.
\end{proof}

\begin{longtable}[c]{|c|c|}
\caption{List of 123 quartic maps.}
\label{table4.4}
\endfirsthead
\endhead
\hline

$\sharp$    & Map \\

\hline
1.1 & $[z^{2} \left(x z +y^{2}\right):
x^{2} z^{2}+2 x \,y^{2} z +y^{4}+y \,z^{3}: z^{4}]
$ \\
\hline
2.1 & $[\left(x z +y^{2}+y z \right) \left(x z +y^{2}\right): 
z^{2} \left(x z +y^{2}\right): z^{4}]
$ \\
\hline
3.1 & $[\left(x z +y^{2}\right)^{2}: z^{2} \left(x z +y^{2}\right): 
z \left(x y z +y^{3}+z^{3}\right)]
$ \\
\hline
4.1 & $[x^{2} z^{2}+2 x \,y^{2} z +x y \,z^{2}+y^{4}: 
y z \left(x z +y^{2}\right): y^{2} z^{2}]
$ \\
\hline
5.1 & $[-\left(x z -y^{2}\right) \left(x z +y^{2}\right): 
y z \left(x z +y^{2}\right): y^{2} z^{2}]
$ \\
\hline
6.1 & $[\left(x z +y^{2}\right)^{2}: y z \left(x z +y^{2}\right): 
z \left(x^{2} z +x \,y^{2}+y^{2} z \right)]
$ \\
\hline
7.1 & $[x y \,z^{2}: \left(x z +y^{2}\right)^{2}: y^{3} z]$ \\
\hline
\end{longtable}


\section{Classification results for quartic non-de \Jonq\ maps}\label{sec:classification}

In this section we give a classification of equivalence classes of plane Cremona maps of degree 4 and length 2.

\begin{theorem}\label{thm:classification}
Let $\varphi_{n.m}$ be a plane Cremona map of type $n.m$ listed in Table \ref{table4.3}.
Then $\varphi_{n.m}$ is equivalent to a map of the corresponding type in Table \ref{table4.4}.
\end{theorem}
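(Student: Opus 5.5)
The plan is to use the fact that a plane Cremona map is determined, up to an automorphism on the left, by its homaloidal linear system. That system is the complete system of quartics with the prescribed multiplicities at the base points $p_1,\ldots,p_6$. So it is enough to show two things for each type $n.m$:
\begin{itemize}
\item the configuration of base points (proper points, infinitely near points, and the marked lines) can be moved by an automorphism $\alpha\in\Aut(\PP^2)$ to a normal form;
\item that normal form is exactly the base configuration of the map listed in Table \ref{table4.4}, or of one member of a family if parameters remain.
\end{itemize}
Once the configurations agree, the two homaloidal systems coincide. Hence $\varphi\circ\alpha^{-1}$ and the table map differ by an element of $\PGL(3,\K)$ on the left, which is the required equivalence.

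The normal form is fixed using the standard coordinates of Section \ref{sec:prel}, treating each graph of Table \ref{table4.2a} in turn.

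\emph{Proper points.} $\Aut(\PP^2)$ acts transitively on frames. Thus we may place up to four proper base points, no three collinear, at $[1:0:0],[0:1:0],[0:0:1],[1:1:1]$. If three points are collinear, as dictated by the marked lines $(i,j,k)$ of Table \ref{table4.3}, we place them instead on $z=0$ at coordinates $0,1,\infty$.

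\emph{Infinitely near points.} An infinitely near point $p\infnear_1 q$ is a tangent direction at $q$. Its standard coordinate $t$ is then normalized by the residual stabilizer of the proper points already fixed. Examples are scalings $\operatorname{diag}(1,\lambda,\mu)$ and elementary transvections fixing a flag.

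\emph{Higher orders.} For points of order $\geq2$, the satellite/non-satellite data and the proximity arrows decide which coordinates are forced. For instance, $p_3\satel p_1$ forces $t_3=\infty$, while collinearity with a marked line forces $t_3=0$. The remaining free coordinates are normalized by the unipotent and diagonal subgroups still preserving the chosen flag.

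Where the stabilizer has run out, the leftover coordinates are genuine moduli. These give the parameter count in the last column of Table \ref{table4.3}, and in those cases Table \ref{table4.4} is read as a family.

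\emph{Checking each table map.} For each map in Table \ref{table4.4} I compute its base points directly. I blow up along the common zeros of $f_0,f_1,f_2$ in standard coordinates and read off:
\begin{itemize}
\item multiplicities $2,2,2,1,1,1$;
\item the proximities and satellite relations;
\item the collinearities.
\end{itemize}
Then I confirm that they match the normal form obtained above.

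The main obstacle is the normalization step for the long chains of infinitely near points, as in graphs 1--10. There, up to five points are infinitely near a single proper point, and the stabilizer of a flag in $\PGL(3,\K)$ is only $6$-dimensional after fixing the point. The task is to show that every admissible tuple $(t_2,\ldots,t_r)$ of standard coordinates can be brought to one specific tuple, which also shows the class is rigid (zero parameters). I would do this by writing the action of the Borel subgroup on the local jet of a curve germ through $p_1$, i.e.\ on coefficients of $y=a_2x^2+a_3x^3+\cdots$ in affine coordinates. The standard coordinates are polynomial in these coefficients. Successive jet coefficients are killed by a translation, a shear $y\mapsto y+bx$, a scaling, and the projective transformation $x\mapsto x/(1+cx)$, which acts on the cubic jet. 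For each chain one must check that the available group dimension suffices; otherwise the leftover coefficient is recorded as a modulus. The remaining cases, with more proper points, are routine by comparison.
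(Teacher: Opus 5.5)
This is essentially the paper's argument: the paper's proof simply lists, type by type, the normal form in standard coordinates to which an automorphism brings the base points. Your remark that the homaloidal net is the complete system of quartics with the prescribed multiplicities at that weighted cluster is the reason, left implicit in the paper, why this gives the equivalence with the map in Table~\ref{table4.4}.

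When you carry it out, compute the residual stabilizer's action on the standard coordinates directly, not through smooth-germ jets or dimension counts. Jets of $y=a_2x^2+\cdots$ cannot encode satellite points or the free points after them (e.g.\ $p_6$ in type 1.1, which the torus $[x:by:b^2z]$ moves by $t_6\mapsto b^3t_6$). A point-line flag has a $5$-dimensional stabilizer, not a $6$-dimensional one. For types 8.1, 13.1, 16.1, 23.2, 48.7 the naive count underestimates the number of moduli.
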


\begin{proof}
Let $\varphi_{n.m}$ be as above. Then, there exists an automorphism $\alpha$ of $\PP^2$ such that $\alpha\circ \varphi_{n.m}$ has the base points with the standard coordinates in the following list, where each item corresponds to the type in brackets:
\begin{itemize}

\item[(1.1)]  $p_1=[1:0:0]$, $p_2=(p_1,0)$, $p_3=(p_1,0,-1)$, $p_4=(p_1,0,-1,0)$, $p_5=(p_1,0,-1,0,\infty)$ and $p_6=(p_1,0,-1,0,\infty,1)$; since $p_1, p_2, p_3, p_4, p_5$ are uniquely determined by $p_6$, according to the enriched weighted proximity graph, in what follows we may and will write only $p_6$;

\item[(2.1)]  $p_4=([1:0:0],0,-1,1)$ and $p_6=([1:0:0],0,-1,0,0)$;
\item[(3.1)]  $p_6=([1:0:0],0,-1,0,0,1)$;
\item[(4.1)]  $p_1=[0:0:1]$ and $p_6=([1:0:0],0,-1,\infty,-1)$;
\item[(5.1)]  $p_1=[0:0:1]$, $p_4=([1:0:0],0,1)$ and $p_6=([1:0:0],0,-1,0)$;
\item[(6.1)]  $p_1=[0:0:1]$ and $p_6=([1:0:0],0,-1,0,1)$;  
\item[(7.1)]  $p_2=([0:0:1],\infty)$ and $p_6=([1:0:0],0,-1,\infty)$;
\item[(8.1)]  $p_2=([0:0:1],\infty)$, $p_5=([1:0:0],0,-1)$ and $p_6=([1:0:0],0,t)$, where $t\in\K^*$, $t\ne-1$;
\item[(9.1)] $p_2=([0:0:1],\infty)$ and $p_6=([1:0:0],0,-1,1)$;
\item[(10.1)] $p_2=([0:0:1],\infty)$ and $p_6=([1:0:0],0,\infty,-1)$;
\item[(11.1)] $p_2=([0:0:1],\infty)$, $p_4=([1:0:0],0)$ and $p_6=([1:0:0],-1,1)$;
\item[(11.2)] $p_2=([0:0:1],\infty)$, $p_4=([1:0:0],0)$ and $p_6=([1:0:0],-1,0)$;
\item[(12.1)] $p_2=([0:0:1],\infty)$ and $p_6=([1:0:0],0,-1,1)$;
\item[(12.2)] $p_2=([0:0:1],\infty)$ and $p_6=([1:0:0],0,0,-1)$;
\item[(13.1)] $p_3=([0:0:1],\infty,-1)$ and $p_6=([1:0:0],0,t)$, where $t\in\K^*$, $t\ne-1$;
\item[(13.2)] $p_3=([0:0:1],\infty,0)$ and $p_6=([1:0:0],0,-1)$;
\item[(14.1)] $p_3=([0:0:1],\infty,\infty)$ and $p_6=([1:0:0],0,-1)$;
\item[(15.1)] $p_2=([0:0:1],\infty)$, $p_3=([0:0:1],-1)$ and $p_6=([1:0:0],0,-1)$;
\item[(16.1)] $p_3=([1:0:0],0,-1)$ and $p_6=([0:0:1],\infty,t)$, where $t\in\K^*$, $t\ne-1$;
\item[(16.2)] $p_3=([1:0:0],0,-1)$ and $p_6=([0:0:1],0,-1)$;
\item[(16.3)] $p_3=([1:0:0],0,-1)$ and $p_6=([0:0:1],\infty,0)$;
\item[(17.1)] $p_4=([1:0:0],0,-1,1)$ and $p_6=([0:0:1],\infty)$;
\item[(17.2)] $p_4=([1:0:0],0,-1,1)$ and $p_6=([0:0:1],0)$; 
\item[(18.1)] $p_5=([1:0:0],0,-1,0,\infty)$ and $p_6=[0:0:1]$;
\item[(19.1)] $p_4=([1:0:0],0,-1,0)$, $p_5=([1:0:0],0,-1,1)$ and $p_6=[0:0:1]$; 
\item[(20.1)] $p_5=([1:0:0],0,-1,0,-1)$ and $p_6=[0:0:1]$;
\item[(21.1)] $p_1 = [0: 1: 0]$, $p_2=[0: 0: 1]$ and $p_6=([1:0:0],-1,1,t)$, where $t\in\K$, $t\ne-1$;
\item[(21.2)] $p_1 = [0: 1: 0]$, $p_2=[0: 0: 1]$ and $p_6=([1:0:0],-1,0,-1)$;
\item[(22.1)] $p_1 = [0: 1: 0]$, $p_2=[0: 0: 1]$ and p$_6=([1:0:0],-1,\infty,1)$;
\item[(23.1)] $p_1 = [0: 1: 0]$, $p_2=[0: 0: 1]$, $p_4=([1:0:0],t)$ and $p_6=([1:0:0],-1,1)$, where $t\in\K^*$, $t\ne-1$;
\item[(23.2)] $p_1 = [0: 1: 0]$, $p_2=[0: 0: 1]$, $p_4=([1:0:0],t)$ and $p_6=([1:0:0],-1,0)$, where $t\in\K^*$, $t\ne-1$;
\item[(24.1)] $p_2 = ([0: 0: 1],1)$, $p_4=([0: 1: 0],1)$ and $p_6=([1:0:0],t)$, where $t\in\K^*$, $t\ne-1$;
\item[(25.1)] $p_1 = [0: 1: 0]$, $p_3=([0: 0: 1],1)$ and $p_6=([1:0:0],1,t)$, $t\in\K^*$, $t\ne-1$;
\item[(25.2)] $p_1 = [0: 1: 0]$, $p_3=([0: 0: 1],1)$ and $p_6=([1:0:0],1,0)$;
\item[(26.1)] $p_1 = [0: 1: 0]$, $p_3=([0: 0: 1],1)$ and $p_6=([1:0:0],1,\infty)$;
\item[(27.1)] $p_1 = [0: 1: 0]$, $p_3=([0: 0: 1],1)$, $p_5=([1:0:0],1)$, $p_6=([1:0:0],t)$, with $t\in\K^{**}$;
\item[(28.1)] $p_1 = [0: 0: 1]$, $p_5=([1:0:0],0,1,t)$ and $p_6=[1:1:1]$, where $t\in\K^*$;
\item[(29.1)] $p_1 = [0: 0: 1]$, $p_5=([1:0:0],0,1,\infty)$ and $p_6=[1:1:1]$;
\item[(30.1)] $p_1 = [0: 0: 1]$, $p_4=([1:0:0],0,1)$, $p_5=([1:0:0],0,t)$ and $p_6=[1:1:1]$, where $t\in\K^{**}$;
\item[(31.1)] $p_3 = ([1: 0: 0],0,t)$, $p_5=([0:0:1],0)$ and $p_6=[1:1:1]$, where $t\in\K^{**}$;
\item[(31.2)] $p_3 = ([1: 0: 0],0,1)$, $p_5=([0:0:1],1)$ and $p_6=[1:1:1]$;
\item[(32.1)] $p_2 = ([0: 0: 1],0)$, $p_5=([1:0:0],0,t)$ and $p_6=[1:1:1]$, where $t\in\K^{**}$;
\item[(32.2)] $p_2 = ([0: 0: 1],0)$, $p_5=([1:0:0],0,0)$ and $p_6=[1:1:1]$;
\item[(32.3)] $p_2 = ([0: 0: 1],0)$, $p_5=([1:0:0],1,1)$ and $p_6=[1:1:1]$;
\item[(33.1)] $p_2 = ([0: 0: 1],0)$, $p_5=([1:0:0],0,\infty)$ and $p_6=[1:1:1]$;
\item[(33.2)] $p_2 = ([0: 0: 1],0)$, $p_5=([1:0:0],1,\infty)$ and $p_6=[1:1:1]$;
\item[(34.1)] $p_2 = ([0: 0: 1],0)$, $p_4=([1:0:0],0)$, $p_5=([1:0:0],t)$ and $p_6=[1:1:1]$, where $t\in\K^{**}$;
\item[(34.2)] $p_2 = ([0: 0: 1],0)$, $p_4=([1:0:0],0)$, $p_5=([1:0:0],1)$ and $p_6=[1:1:1]$;
\item[(35.1)] $p_4=([1:0:0],\infty,1,t)$, $p_5=[0:1:0]$ and $p_6=[1:1:1]$, where $t\in\K^*$;
\item[(35.2)] $p_4=([1:0:0],\infty,1,1)$, $p_5=[0:1:0]$ and $p_6=[1:1:0]$;
\item[(36.1)] $p_3=([1:0:0],\infty,t)$, $p_5=([0:1:0],\infty)$ and $p_6=[1:1:1]$, where $t\in\K^{**}$;
\item[(36.2)] $p_3=([1:0:0],\infty,-1)$, $p_5=([0:1:0],\infty)$ and $p_6=[1:1:0]$;
\item[(36.3)] $p_3=([1:0:0],\infty,-1)$, $p_5=([0:1:0],0)$ and $p_6=[1:1:1]$;
\item[(36.4)] $p_3=([1:0:0],\infty,-1)$, $p_5=([0:1:0],1)$ and $p_6=[1:1:1]$;
\item[(37.1)] $p_1 = [0: 0: 1]$, $p_2=[0: 1: 0]$, $p_3=[1: 0: 0]$ and $p_6 =([1: 1: 1],t,u)$, where $t\in\K^{**}$, $u\in\K$, $u\ne t^2-t$;
\item[(37.2)] $p_1 = [0: 0: 1]$, $p_2=[0: 1: 0]$, $p_3=[1: 0: 0]$ and $p_6 =([1: 1: 1],0,u)$, where $u\in\K^*$;
\item[(37.3)] $p_1 = [0: 0: 1]$, $p_2=[0: 1: 0]$, $p_3=[1: 0: 0]$ and $p_6 =([1: 1: 1],t,0)$, where $t\in\K^{**}$;
\item[(38.1)] $p_1 = [0: 0: 1]$, $p_2=[0: 1: 0]$, $p_5=([1: 0: 0],t,u)$ and $p_6 =[1: 1: 1]$, where $t\in\K^{**}$, $u\in\K$, $u\ne t-t^2$;
\item[(38.2)] $p_1 = [0: 0: 1]$, $p_2=[0: 1: 0]$, $p_5=([1: 0: 0],t,0)$ and $p_6 =[1: 1: 1]$, where $t\in\K^{**}$;
\item[(38.3)] $p_1 = [0: 0: 1]$, $p_2=[0: 1: 0]$, $p_5=([1: 0: 0],1,u)$ and $p_6 =[1: 1: 1]$, where $u\in\K^*$;
\item[(39.1)] $p_1 = [0: 0: 1]$, $p_2=[0: 1: 0]$, $p_5=([1: 0: 0],t,\infty)$ and $p_6=[1: 1: 1]$, where $t\in\K^{**}$;
\item[(39.2)] $p_1 = [0: 0: 1]$, $p_2=[0: 1: 0]$, $p_5=([1: 0: 0],1,\infty)$ and $p_6=[1: 1: 1]$; 
\item[(40.1)] $p_1 = [0: 0: 1]$, $p_5=([1: 0: 0],0,t)$ and $p_6=([1: 1: 1],\infty)$, where $t\in\K^*$, $t\ne-1$;
\item[(40.2)] $p_1 = [0: 0: 1]$, $p_5=([1: 0: 0],0,1)$ and $p_6=([1: 1: 1],1)$;
\item[(40.3)] $p_1 = [0: 0: 1]$, $p_5=([1: 0: 0],0,1)$ and $p_6=([1: 1: 1],0)$;
\item[(41.1)] $p_2 = ([1: 0: 0],0)$, $p_4=([0:0:1],\infty)$ and $p_6=([1: 1: 1],t)$, where $t\in\K^{**}$, $t\ne-1/2$;
\item[(41.2)] $p_2 = ([1: 0: 0],0)$, $p_4=([0:0:1],\infty)$ and $p_6=([1: 1: 1],0)$;
\item[(41.3)] $p_2 = ([1: 0: 0],0)$, $p_4=([0:0:1],1)$ and $p_6=([1: 1: 1],\infty)$;
\item[(41.4)] $p_2 = ([1: 0: 0],0)$, $p_4=([0:0:1],\infty)$ and $p_6=([1: 1: 1],1)$;
\item[(41.5)] $p_2 = ([1: 0: 0],0)$, $p_4=([0:0:1],1)$ and $p_6=([1: 1: 1],0)$;
\item[(42.1)] $p_2 = ([1: 0: 0],0)$, $p_3=[0:0:1]$ and $p_6=([1: 1: 1],\infty,t)$, where $t\in\K^*$, $t\ne-1$;
\item[(42.2)] $p_2 = ([1: 0: 0],0)$, $p_3=[0:0:1]$ and $p_6=([1: 1: 1],0,-1)$;
\item[(42.3)] $p_2 = ([1: 0: 0],0)$, $p_3=[0:0:1]$ and $p_6=([1: 1: 1],1,0)$;
\item[(42.4)] $p_2 = ([1: 0: 0],0)$, $p_3=[0:0:1]$ and $p_6=([1: 1: 1],\infty,0)$;
\item[(43.1)] $p_1 = [0: 0: 1]$, $p_3=([1: 0: 0],1)$, $p_5=([0: 1: 0],1)$ and $p_6=[t,u,1]$, where $t,u\in\K^{**}$, $u\ne t/(t-1)$;
\item[(43.2)] $p_1 = [0: 0: 1]$, $p_3=([1: 0: 0],1)$, $p_5=([0: 1: 0],1)$ and $p_6=[t,1,1]$, where $t\in\K^{**}$;
\item[(43.3)] $p_1 = [0: 0: 1]$, $p_3=([1: 0: 0],1)$, $p_5=([0: 1: 0],1)$ and $p_6=[1,1,1]$;
\item[(44.1)] $p_1 = [0: 0: 1]$, $p_2=[0: 1: 0]$, $p_4=([1: 0: 0],t)$, $p_5=([1: 0: 0],u)$ and $p_6=[1,1,1]$, where $t,u\in\K^{**}$, $u\ne t$;
\item[(44.2)] $p_1 = [0: 0: 1]$, $p_2=[0: 1: 0]$, $p_4=([1: 0: 0],t)$, $p_5=([1: 0: 0],1)$ and $p_6=[1,1,1]$, where $t\in\K^{**}$;
\item[(45.1)] $p_1 = [0: 0: 1]$, $p_2=[1: 0: 0]$, $p_4=([0: 1: 0],u/t)$ and $p_6=([1: 1: 1],(u-1)/(u-t))$, where $t,u\in\K^{**}$, $u\ne t,1-t$;
\item[(45.2)] $p_1 = [0: 0: 1]$, $p_2=[1: 0: 0]$, $p_4=([0: 1: 0],t)$ and $p_6=([1: 1: 1],0)$, where $t\in\K^{**}$;
\item[(45.3)] $p_1 = [0: 0: 1]$, $p_2=[1: 0: 0]$, $p_4=([0: 1: 0],\infty)$ and $p_6=([1: 1: 1],t)$, where $t\in\K^{**}$;
\item[(45.4)] $p_1 = [0: 0: 1]$, $p_2=[1: 0: 0]$, $p_4=([0: 1: 0],t)$ and $p_6=([1: 1: 1],\infty)$, where $t\in\K^{**}$;
\item[(45.5)] $p_1 = [0: 0: 1]$, $p_2=[1: 0: 0]$, $p_4=([0: 1: 0],1)$ and $p_6=([1: 1: 1],0)$;
\item[(46.1)] $p_1 = [1: 0: 0]$, $p_4=([0: 1: 0],\infty,1)$, $p_5=[1: 1: 1]$ and $p_6=[t:u:1]$, where $t\in\K^{**}$, $u\in\K$, $u\ne1,t,1/t$;
\item[(46.2)] $p_1 = [1: 0: 0]$, $p_4=([0: 1: 0],\infty,1)$, $p_5=[1: 1: 1]$ and $p_6=[t:1:1]$, where $t\in\K^{**}$;
\item[(46.3)] $p_1 = [1: 0: 0]$, $p_4=([0: 1: 0],\infty,1)$, $p_5=[1: 1: 1]$ and $p_6=[1:u:1]$, where $u\in\K$, $u\ne1$;
\item[(47.1)] $p_2 = ([1: 0: 0],\infty)$, $p_4=([0: 1: 0],\infty)$, $p_5=[1: 1: 1]$ and $p_6=[t:u:1]$, where $t,u\in\K^{**}$, $u\ne 1/t$;
\item[(47.2)] $p_2 = ([1: 0: 0],\infty)$, $p_4=([0: 1: 0],\infty)$, $p_5=[1: 1: 1]$ and $p_6=[t:1:1]$, where $t\in\K^{**}$;
\item[(47.3)] $p_2 = ([1: 0: 0],\infty)$, $p_4=([0: 1: 0],\infty)$, $p_5=[1: 1: 1]$ and $p_6=[0:u:1]$, where $u\in\K^{**}$;
\item[(47.4)] $p_2 = ([1: 0: 0],\infty)$, $p_4=([0: 1: 0],\infty)$, $p_5=[1: 1: 1]$ and $p_6=[1:u:1]$, where $u\in\K^{**}$;
\item[(47.5)] $p_2 = ([1: 0: 0],\infty)$, $p_4=([0: 1: 0],\infty)$, $p_5=[1: 1: 1]$ and $p_6=[0:1:1]$;
\item[(48.1)] $p_2 = ([1: 0: 0],\infty)$, $p_3=[0: 1: 0]$, $p_5=([1: 1: 1],1)$ and $p_6=[t:u:1]$, where $u\in\K^{**}$, $t\in\K$, $t\ne1,u,(2u-1)/u$;
\item[(48.2)] $p_2 = ([1: 0: 0],t)$, $p_3=[0: 1: 0]$, $p_5=([1: 1: 1],0)$ and $p_6=[0:0:1]$, where $t\in\K^{**}$;
\item[(48.3)] $p_2 = ([1: 0: 0],\infty)$, $p_3=[0: 1: 0]$, $p_5=([1: 1: 1],1)$ and $p_6=[t:1:1]$, where $t\in\K$, $t\ne1$;
\item[(48.4)] $p_2 = ([1: 0: 0],t)$, $p_3=[0: 1: 0]$, $p_5=([1: 1: 1],\infty)$ and $p_6=[0:0:1]$, where $t\in\K^{**}$;
\item[(48.5)] $p_2 = ([1: 0: 0],\infty)$, $p_3=[0: 1: 0]$, $p_5=([1: 1: 1],1)$ and $p_6=[1:u:1]$, where $u\in\K^{**}$;
\item[(48.6)] $p_2 = ([1: 0: 0],t)$, $p_3=[0: 1: 0]$, $p_5=([1: 1: 1],1)$ and $p_6=[0:0:1]$, where $t\in\K^{**}$;
\item[(48.7)] $p_2 = ([1: 0: 0],t)$, $p_3=[0: 1: 0]$, $p_5=([1: 1: 1],0)$ and $p_6=[1:0:1]$, where $t\in\K^{**}$;
\item[(48.8)] $p_2 = ([1: 0: 0],\infty)$, $p_3=[0: 1: 0]$, $p_5=([1: 1: 1],\infty)$ and $p_6=[1:-1:-1]$;
\item[(49.1)] $p_3 = ([0: 1: 0],\infty,1)$, $p_4=[1: 0: 0]$, $p_5=[1: 1: 1]$ and $p_6=[1:t:u]$, where $u\in\K^{**}$, $t\in\K$, $t\ne u,u^2$;
\item[(49.2)] $p_3 = ([0: 1: 0],\infty,1)$, $p_4=[1: 0: 0]$, $p_5=[1: 1: 1]$ and $p_6=[1:t:1]$, where $t\in\K$, $t\ne 1$;
\item[(49.3)] $p_3 = ([0: 1: 0],\infty,1)$, $p_4=[1: 0: 0]$, $p_5=[1: 1: 1]$ and $p_6=[1:t:t]$, where $t\in\K^{**}$;
\item[(50.1)] $p_1 = [1: 0: 0]$, $p_2=[0: 1: 0]$, $p_3=[0: 0: 1]$, $p_5=([1:1:1],t)$ and $p_6=[u:v:1]$, where $t,u,v\in\K^{**}$, $v\ne u$, $t\ne u(v - 1)/(v(u - 1))$;
\item[(50.2)] $p_1 = [1: 0: 0]$, $p_2=[0: 1: 0]$, $p_3=[0: 0: 1]$, $p_5=([1:1:1],0)$ and $p_6=[u:v:1]$, where $u,v\in\K^{**}$, $v\ne u$;
\item[(50.3)] $p_1 = [1: 0: 0]$, $p_2=[0: 1: 0]$, $p_3=[0: 0: 1]$, $p_5=([1:1:1],t)$ and $p_6=[u:1:1]$, where $t,u\in\K^{**}$;
\item[(50.4)] $p_1 = [1: 0: 0]$, $p_2=[0: 1: 0]$, $p_3=[0: 0: 1]$, $p_5=([1:1:1],(1-v)/(1-u))$ and $p_6=[u:v:1]$, where $u,v\in\K^{**}$, $v\ne u$;
\item[(50.5)] $p_1 = [1: 0: 0]$, $p_2=[0: 1: 0]$, $p_3=[0: 0: 1]$, $p_5=([1:1:1],0)$ and $p_6=[1:v:1]$, where $v\in\K^{**}$;
\item[(51.1)] $p_1 = [1: 0: 0]$, $p_2=[0: 1: 0]$, $p_4=([0: 0: 1],t)$, $p_5=[1:1:1]$ and $p_6=[u:v:1]$, where $t,u,v\in\K^{**}$, $v\ne u$, $t\ne v(u - 1)/(u(v - 1))$;
\item[(51.2)] $p_1 = [1: 0: 0]$, $p_2=[0: 1: 0]$, $p_4=([0: 0: 1],t)$, $p_5=[1:1:1]$ and $p_6=[u:1:1]$, where $t,u\in\K^{**}$;
\item[(51.3)] $p_1 = [1: 0: 0]$, $p_2=[0: 1: 0]$, $p_4=([0: 0: 1],1)$, $p_5=[1:1:1]$ and $p_6=[u:v:1]$, where $u,v\in\K^{**}$, $v\ne u$;
\item[(51.4)] $p_1 = [1: 0: 0]$, $p_2=[0: 1: 0]$, $p_4=([0: 0: 1],t)$, $p_5=[1:1:1]$ and $p_6=[u:u:1]$, where $t,u\in\K^{**}$;
\item[(51.5)] $p_1 = [1: 0: 0]$, $p_2=[0: 1: 0]$, $p_4=([0: 0: 1],1)$, $p_5=[1:1:1]$ and $p_6=[u:1:1]$, where $u\in\K^{**}$;
\item[(52.1)] $p_1 = [1: 0: 0]$, $p_3=([0: 1: 0],t)$, $p_4=[0: 0: 1]$, $p_5=[1:1:1]$ and $p_6=[u:v:1]$, where $t,u,v\in\K^{**}$, $v\ne u$, $t\ne  (v-u)/(u(v - 1))$;
\item[(52.2)] $p_1 = [1: 0: 0]$, $p_3=([0: 1: 0],t)$, $p_4=[0: 0: 1]$, $p_5=[1:1:1]$ and $p_6=[u:0:1]$, where $t,u\in\K^{**}$;
\item[(52.3)] $p_1 = [1: 0: 0]$, $p_3=([0: 1: 0],t)$, $p_4=[0: 0: 1]$, $p_5=[1:1:1]$ and $p_6=[0:v:1]$, where $t,v\in\K^{**}$;
\item[(52.4)] $p_1 = [1: 0: 0]$, $p_3=([0: 1: 0],t)$, $p_4=[0: 0: 1]$, $p_5=[1:1:1]$ and $p_6=[u:u:1]$, where $t,u\in\K^{**}$;
\item[(52.5)] $p_1 = [1: 0: 0]$, $p_3=([0: 1: 0],t)$, $p_4=[0: 0: 1]$, $p_5=[1:0:1]$ and $p_6=[0:1:1]$, where $t\in\K^{**}$;
\item[(53.1)] $p_1 = [1: 0: 0]$, $p_2=[0: 1: 0]$, $p_3=[0: 0: 1]$, $p_4=[1:1:1]$, $p_5=[t:u:1]$ and $p_6=[v:w:1]$, where $t,u,v,w\in\K^{**}$, $u\ne t$, $v\ne t$, $w\ne u, v, uv/t, (uv + t - u - v)/(t - 1), uv(t - 1)/(tu + tv - uv - t)$;
\item[(53.2)] $p_1 = [1: 0: 0]$, $p_2=[0: 1: 0]$, $p_3=[0: 0: 1]$, $p_4=[1:1:1]$, $p_5=[t:1:1]$ and $p_6=[v:w:1]$, where $t,v,w\in\K^{**}$, $v\ne t$, $w\ne v, v/t$;
\item[(53.3)] $p_1 = [1: 0: 0]$, $p_2=[0: 1: 0]$, $p_3=[0: 0: 1]$, $p_4=[1:1:1]$, $p_5=[t:u:1]$ and $p_6=[v(t-1): uv + t - u - v:t-1]$, where $t,u,v\in\K^{**}$, $u\ne t$, $v\ne t$;
\item[(53.4)] $p_1 = [1: 0: 0]$, $p_2=[0: 1: 0]$, $p_3=[0: 0: 1]$, $p_4=[1:1:1]$, $p_5=[t:1:1]$ and $p_6=[1:w:1]$, where $t,w\in\K^{**}$, $w\ne 1/t$;
\item[(53.5)] $p_1 = [1: 0: 0]$, $p_2=[0: 1: 0]$, $p_3=[0: 0: 1]$, $p_4=[1:1:1]$, $p_5=[1:w:w]$ and $p_6=[1:w:1]$, where $w\in\K^{**}$.
\end{itemize}
\end{proof}

\begin{remark}
Concerning type 8.1 in Table \ref{table4.3}, we note that
\begin{itemize}
\item the choice of $p_1$ and $p_3$ depends on two parameters, since $p_1$ and $p_3$ are proper points in the plane;
\item the choice of $p_2$, $p_4$, $p_5$ and $p_6$ depends only on one parameter, because they are infinitely near other points and they are not satellite.
\end{itemize}
Since the total number of parameters of the configuration is 8, one expects that there is an automorphism of the plane that maps the configuration of the six points in a fixed one.

First, note that, up to automorphisms of the plane, we may choose
\begin{itemize}
\item $p_1=[0:0:1]$ and $p_2=(p_1,\infty)$, that means that $p_2$ is the infinitely near point of order 1 to $p_1$ in the direction of the line $x=0$;
\item $p_3=[1:0:0]$ and $p_4=(p_3,0)$, that means that $p_4$ is the infinitely near point of order 1 to $p_3$ in the direction of the line $z=0$;
\item $p_5=(p_3,0,-1)$, that is the infinitely near point of order 2 to $p_3$, and infinitely near of order 1 to $p_4$, such that the conic $y^2+xz=0$ passes through $p_5$.
\end{itemize}
In particular, the conic with equation $y^2+xz=0$ is the unique (irreducible) conic passing through $p_1, \ldots, p_5$.
Now the automorphisms of the plane that fix $p_1, p_2, p_3, p_4$ and $p_5$ have the following form:
\begin{equation}\label{auto_b}
[x:y:z] \mapsto [x : b y : b^2 z]
\end{equation}
for some $b\in\K^*$.
Therefore, the point $p_6$ has standard coordinates $(p_3,0,t)$ for some $t\in\K^{*}$, $t\ne-1$, because $p_6$ is not satellite, it must be different from $p_5$ and it cannot be 0, otherwise $p_6$ would be collinear with $p_3$ and $p_4$. 

One checks that the automorphisms \eqref{auto_b} cannot change the point $p_6$ to another one.

We conclude that the configuration of the six points depends on one parameter, up to automorphisms of the plane.

We may say that the configuration 8.1 has not the expected number of parameters.

Similarly, one may check that the same occurs for types 13.1, 16.1, 23.2, 48.7, while all other types have always the expected number of parameters which is the number of parameters of the configuration subtracted by 8, the dimension of the automorphisms of the plane.
\end{remark}

\section{Height of points with respect to a plane Cremona map}
\label{sec:height}

\begin{definition}
Let $\varphi$ be a plane Cremona map.
Let us define the \emph{height $h_\varphi(p)$ of a point $p\in\BB(\PP^2)$ with respect to $\varphi$} as follows:
\begin{equation*}\label{eq:height}
\h_\varphi(p)=\begin{cases}
0 & \text{if $p$ is not a base point of $\varphi$,} \\
1 & \text{if $p$ is a proper base point of $\varphi$,} \\
n+1 & \text{if $p$ is a base point of $\varphi$ and $p\infnear_n p'\in\PP^2$.}
\end{cases}
\end{equation*}
\end{definition}

%
%

\begin{notation}\label{not:psi(p)}
Let $\varrho$ be an involutory ordinary quadratic map and let $p_1,p_2,p_3\in\PP^2$ be its base points.
Denote by $\ell_1$ ($\ell_2$, $\ell_3$, resp.) the line passing through $p_2$ and $p_3$ ($p_1$ and $p_3$, $p_1$ and $p_2$, resp.) and denote by $T$ the triangle $\ell_1\cup\ell_2\cup\ell_3$.

Let us define a bijection $\bar\varrho\colon\BB(\PP^2)\to\BB(\PP^2)$ induced by $\varrho$ as follows:
\begin{itemize}
\item $\bar\varrho(p) = p$, if $p=p_i$, $i=1,2,3$;
\item $\bar\varrho(p) = \varrho(p)$, if $p\in\PP^2\setminus T$;
\item $\bar\varrho(p)$ is the point infinitely near $p_i$ of order 1 in the direction of the strict transform of the line passing through $p_i$ and $p$, if $p\in\ell_i\setminus\{p_j,p_k\}$, $\{i,j,k\}=\{1,2,3\}$;
\item $\bar\varrho(p)$ is the point infinitely near $p_j$ of order 1 in the direction of the line $\ell_i$, if $p$ is the point infinitely near $p_i$ of order 1 in the direction of the line $\ell_j$, where $\{i,j\}\subset\{1,2,3\}$;
\item $\bar\varrho(p)$ is the point $q\in \ell_i$ such that the line passing through $p_i$ and $q$ is the strict transform of the line passing through $p_i$ in the direction of the point $p$, if $p$ is infinitely near $p_i$ of order 1 (not lying on $\ell_j$ and $\ell_k$, $\{i,j,k\}=\{1,2,3\}$);
\item $\bar\varrho(p)$ is the point infinitely near $\varrho(p')$ of order $n$ in the direction of the strict transform of a plane curve $C$, if $p$ is infinitely near $p'\in\PP^2$ and $C$ is a curve passing through $p$.
\end{itemize}
Let us say that $\bar\varrho(p)\in\BB(\PP^2)$ is the point corresponding to $p\in\BB(\PP^2)$ via $\varrho$.
\end{notation}

\begin{proposition}\label{PropMAC118a}
Let $p_1, p_2, p_3$ be the base points of an involutory ordinary quadratic plane Cremona map $\varrho\colon \mathbb{P}^2 \dashrightarrow \mathbb{P}^2$.
Let $\varphi\colon \mathbb{P}^2 \dashrightarrow \mathbb{P}^2$ be a plane Cremona map of degree $d>1$ with base points $p_4,\ldots,p_r$ and possibly $p_1,p_2,p_3$. Denote by $m_i$ the multiplicity of $\varphi$ at $p_i$, $i=1,\ldots,r$
(that is $m_i=0$ if $p_i$ is not a base point of $\varphi$, $i=1,2,3$).
Denote by $\bar\varrho(p)$ the (possibly infinitely near) point corresponding to $p$ via $\varrho$ as in Notation \ref{not:psi(p)}.

Then, the composite map $\varphi\circ \varrho^{-1}=\varphi\circ \varrho$ has degree $d-\varepsilon$, where
\[
\varepsilon = m_1+m_2+m_3-d,
\]
and it has $\bar\varrho(p_i)$, $i=4,\ldots,r$, as base point of multiplicity $m_i$. Furthermore, it has multiplicity $m_i-\varepsilon\geq0$ at $p_i$, $i=1,2,3$ (that is, $p_i$ is not a base point of $\varphi\circ \varrho$ when $\varepsilon=m_i$).
\end{proposition}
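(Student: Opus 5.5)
The plan is to prove the statement on a common resolution, tracking the homaloidal net through the standard lattice description of the blown-up plane. Since $\varrho$ is involutory and ordinary, we may assume after a change of coordinates that $p_1,p_2,p_3$ are the three coordinate points and $\varrho=\sigma$. Let $\pi\colon S\to\PP^2$ be the blow-up of $p_1,p_2,p_3$, followed by the blow-ups of the remaining base points $p_4,\ldots,p_r$ of $\varphi$. By Notation \ref{not:psi(p)}, $\sigma$ lifts to an automorphism of the blow-up $X$ of $p_1,p_2,p_3$. This automorphism exchanges the exceptional curve $E_i$ with the strict transform $L_i$ of $\ell_i$. On $\mathrm{Pic}(X)=\langle H,E_1,E_2,E_3\rangle$ it acts by
\[
H\mapsto 2H-E_1-E_2-E_3,\qquad E_i\mapsto H-E_j-E_k .
\]
The bijection $\bar\varrho$ is exactly the map on $\BB(\PP^2)$ induced by this automorphism of $X$, with the convention that points of $X$ go to points of $X$. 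One checks this case by case against the items of Notation \ref{not:psi(p)}: a point of $L_i$ goes to a point of $E_i$, and so on.

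First, the homaloidal system $\Lambda$ of $\varphi$ has strict transform on $X$ of class $dH-m_1E_1-m_2E_2-m_3E_3$. The remaining conditions are that its members pass through $p_4,\ldots,p_r\in\BB(X)$ with multiplicities $m_i$. Second, I pull $\Lambda$ back through the lifted automorphism; this pulled-back system is the homaloidal system of $\varphi\circ\varrho$. Applying the action above, its class is
\[
dH-\sum m_iE_i\mapsto (2d-m_1-m_2-m_3)H-\sum_i\bigl(d-m_j-m_k\bigr)E_i .
\]
Here $2d-\sum m_i=d-\varepsilon$, and $d-m_j-m_k=m_i-\varepsilon$. The points $p_4,\ldots,p_r$ are carried to $\bar\varrho(p_4),\ldots,\bar\varrho(p_r)$, and multiplicities at them are preserved because the map is an isomorphism of $X$ there.

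It then remains to check two things. First, $d-\varepsilon$ is truly the degree, meaning the new system has no fixed component. The system pushed down to $\PP^2$ is a net of curves of degree $d-\varepsilon$ with no fixed part, since the lifted automorphism sends a system without fixed components on $X$ to one without fixed components, provided no $E_i$ becomes fixed. Second, the multiplicities $m_i-\varepsilon$ must be nonnegative. For this I intersect the class of $\Lambda$ with the class $H-E_j-E_k$ of $L_i$. A general member of $\Lambda$ does not contain the line $\ell_i$, because $\Lambda$ has no fixed component and $d>1$ makes $\Lambda$ irreducible. Hence $d-m_j-m_k\ge0$, that is, $m_i-\varepsilon\ge0$. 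This inequality also shows that $E_i$ is never a fixed component of the transformed system, which settles the degree claim.

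The main obstacle is the bookkeeping in the identification of $\bar\varrho$ with the automorphism of $X$ on infinitely near points. This is especially delicate for base points infinitely near $p_i$ lying on the directions of $\ell_j$ or $\ell_k$, and for the last item of the notation, where a point infinitely near $p'\notin T$ is transported along a curve. I would handle it by noting that, away from $E_1\cup E_2\cup E_3\cup L_1\cup L_2\cup L_3$, the automorphism is a local isomorphism. Near the exceptional configuration the hexagon $E_i\leftrightarrow L_i$ is swapped, so points of $\BB(X)$ correspond bijectively and proximity and multiplicities are preserved.
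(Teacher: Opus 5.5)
Your argument is correct. The paper gives no proof of its own here; it only refers to \cite[Proposition 4.2.5]{MAC}. So your proposal is a self-contained version of that classical computation rather than a different route. You pass to the blow-up $X$ of $p_1,p_2,p_3$, where the lift of $\varrho$ is an automorphism acting on $\mathrm{Pic}(X)$ by the matrix you write down; this is just the characteristic matrix of an ordinary quadratic map. The bijection of $\BB(X)$ that this automorphism induces then serves as the definition of $\bar\varrho$. That last point is a real gain. The last item of Notation \ref{not:psi(p)} is only informal for points infinitely near some $p_i$ of order at least $2$, since there ``$\varrho(p')$'' is undefined and the order drops by one. Your identification makes precise the correspondence that Lemma \ref{lem:oq+-1} relies on. The citation buys brevity; your version buys a proof that also justifies the notation.

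Two small corrections.

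First, ``involutory and ordinary'' does not by itself let you assume $\varrho=\sigma$. The map $[x:y:z]\mapsto[xz:yz:xy]$ is an involutory ordinary quadratic map based at the coordinate points. However, it contracts $\ell_1=\{x=0\}$ to $p_2$ and $\ell_2$ to $p_1$. For this map the multiplicities at $p_1,p_2$ are $m_2-\varepsilon$ and $m_1-\varepsilon$, the reverse of what the statement claims. What your reduction actually uses, and what Notation \ref{not:psi(p)} tacitly presupposes, is that $\varrho$ contracts $\ell_i$ onto $p_i$ for each $i$. Under that hypothesis, in coordinates $\varrho=D\circ\sigma$ with $D$ diagonal. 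Conjugating by a diagonal $E$ with $E^2=D^{-1}$ turns it into $\sigma$. You should state this as the hypothesis you are using.

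Second, both facts you need follow directly from $\Lambda$ having no fixed component, so its general member does not contain $\ell_i$:
\begin{itemize}
\item the inequality $d-m_j-m_k\ge 0$;
\item the fact that no $E_i$ is a fixed component of the transformed system.
\end{itemize}
It is this, not the inequality itself, that rules out $E_i$ as a component. Neither irreducibility of $\Lambda$ nor $d>1$ is needed for this step.
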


\begin{proof}
Cf.\ Proposition 4.2.5 in \cite{MAC}.
\end{proof}

Next Lemma and Proposition have been proved in \cite{CN}.

\begin{lemma}\label{lem:oq+-1}
Let $\varphi$ be a plane Cremona map and $\varrho$ be an involutory ordinary quadratic map.
If $p\in\BB(\PP^2)$ and $\bar p=\bar\varrho(p)\in\BB(\PP^2)$ as in Notation \ref{not:psi(p)}, then
\[
-1 \leq \h_\varphi(p)- \h_{\varphi\circ\varrho}(\bar p)\leq 1.
\]
\end{lemma}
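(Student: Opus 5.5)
Write $\psi=\varphi\circ\varrho$ and fix $p\in\BB(\PP^2)$ with $\bar p=\bar\varrho(p)$. The plan is to read off the base points of $\psi$, with their multiplicities, from Proposition \ref{PropMAC118a}, and then compare infinitesimal orders point by point. By that proposition, the base points of $\psi$ are of two kinds: the points $\bar\varrho(p_i)$ for $i\ge4$, with multiplicity $m_i$, and possibly $p_1,p_2,p_3$, with multiplicity $m_i-\varepsilon$. In particular, $p$ is a base point of $\varphi$ and not one of $p_1,p_2,p_3$ exactly when $\bar p$ is a base point of $\psi$ and not one of $p_1,p_2,p_3$. Since $\bar\varrho$ fixes $p_1,p_2,p_3$, the cases split naturally as follows.

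\textbf{Case $p=p_i$ with $i\le3$.} Then $\bar p=p$ is a proper point. Hence $\h_\varphi(p),\h_\psi(\bar p)\in\{0,1\}$, and the difference is automatically in $[-1,1]$.

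\textbf{Case $p\notin\{p_1,p_2,p_3\}$ and $p$ not a base point of $\varphi$.} Then $\bar p$ is not a base point of $\psi$, so both heights are $0$.

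\textbf{Case $p\notin\{p_1,p_2,p_3\}$ and $p$ a base point of $\varphi$.} Then $\bar p$ is a base point of $\psi$, and we must show that the infinitesimal order of $p$ over $\PP^2$ changes by at most one under $\bar\varrho$. Let $p\infnear_n p'\in\PP^2$, with $n=0$ meaning $p$ is proper. We go through Notation \ref{not:psi(p)}:
\begin{itemize}
\item If $p'\notin T$, then $\varrho$ is a local isomorphism at $p'$, so $\bar p\infnear_n\varrho(p')$ and the heights agree.
\item If $p'\in\ell_i\setminus\{p_j,p_k\}$, then $\varrho$ contracts $\ell_i$ to $p_i$. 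The proper point $p'$ goes to a point infinitely near $p_i$ of order 1, and a point of order $n$ above $p'$ goes to a point of order $n+1$ above $p_i$; the blow-up of $p_i$ identifies a neighbourhood of $p'$ with a neighbourhood of the corresponding point on $E_i$. So the height increases by exactly one.
\item If $p'=p_i$ and $n\ge1$, the inverse picture applies. Points on $E_i$ away from the strict transforms of $\ell_j,\ell_k$ become proper points of $\ell_i$, so the height decreases by one.
\item If $p'=p_i$ and $p$ lies on $E_i$ at its intersection with the strict transform of $\ell_j$, then $\bar\varrho$ sends this point to the point over $p_j$ in direction $\ell_i$. Again $\varrho$ lifts to a local isomorphism of the blown-up surfaces, so the order is preserved.
\end{itemize}

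The key structural fact behind all four items is that $\varrho$ becomes a biregular automorphism of the blow-up $S$ of $\PP^2$ at $p_1,p_2,p_3$, exchanging $E_i$ with the strict transform of $\ell_i$. Consequently $\bar\varrho$ is induced by this automorphism. The height of a point $q\in\BB(\PP^2)$ above $S$ equals its height measured on $S$ plus one if $q$ lies over some $E_i$, and plus zero otherwise, where "height on $S$" means one plus the order above a point of $S$. The automorphism preserves heights on $S$ and only swaps lying over $E_i$ with lying over the strict transform of $\ell_i$. Hence the difference of the two heights is in $\{-1,0,1\}$.

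The main obstacle is making this last accounting precise for points lying over $E_i\cap\tilde\ell_j$, where both corrections interact; I will check directly that such points stay at the same height.
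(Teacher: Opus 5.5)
Your proof is correct. The paper does not prove this lemma itself; it refers to \cite{CN}. Your route is the natural direct verification: use Proposition~\ref{PropMAC118a} to match the base points of $\varphi$ and $\varphi\circ\varrho$ away from $p_1,p_2,p_3$, then compare infinitesimal orders under $\bar\varrho$.

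Your structural paragraph already finishes the second step, and more cleanly than the bullet list. Let $c(q)\in\{0,1\}$ record whether $q\in\BB(S)=\BB(\PP^2)\setminus\{p_1,p_2,p_3\}$ lies over $E_1\cup E_2\cup E_3$. For a base point $p\notin\{p_1,p_2,p_3\}$ you get $\h_\varphi(p)-\h_{\varphi\circ\varrho}(\bar p)=c(p)-c(\bar p)\in\{-1,0,1\}$, because the lift $\tilde\varrho\in\Aut(S)$ preserves heights measured on $S$.

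So the ``main obstacle'' you flag is not one. Nothing has to be checked over $E_i\cap\tilde\ell_j$. Those points do keep their height, since $\tilde\varrho(\tilde\ell_j)$ is one of the $E_k$, but the inequality does not need this. You also never need to know which curve $\tilde\varrho(E_i)$ is. That is just as well, since the involution $[xz:yz:xy]$ sends $\tilde\ell_2$ to $E_1$, not to $E_2$, so ``$E_i\leftrightarrow\tilde\ell_i$'' is only a convention implicit in Notation~\ref{not:psi(p)}.

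Two things are worth stating explicitly:
\begin{itemize}
\item Your ``exactly when'' step uses that Proposition~\ref{PropMAC118a} lists \emph{all} base points of $\varphi\circ\varrho$. Alternatively, apply it to $\varphi=(\varphi\circ\varrho)\circ\varrho$ and use $\bar\varrho^2=\mathrm{id}$.
\item You are reading $\bar\varrho$ as the bijection induced by $\tilde\varrho$. That is the intended meaning of Notation~\ref{not:psi(p)}; its last clause, taken literally, is only accurate when $p'\notin T$.
\end{itemize}
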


\begin{proposition}\label{oq>height}
Let $\varphi$ be a plane Cremona map. Then
\[
\oq(\varphi) \geq \max\{ \h_\varphi(p) \mid p \in \BB(\PP^2) \}.
\]
\end{proposition}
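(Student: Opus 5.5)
The statement to prove is Proposition \ref{oq>height}: $\oq(\varphi)\ge\max_p \h_\varphi(p)$. I will argue by induction on $n=\oq(\varphi)$, using Lemma \ref{lem:oq+-1} as the inductive step.

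\emph{Base case.} If $n=0$, then $\varphi$ is an automorphism. It has no base points, so every point has height $0$ and the inequality holds.

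\emph{Inductive step.} Suppose $n\ge1$ and fix a minimal decomposition \eqref{eq:decomp}. Since $\sigma$ is an involution, I can write the first quadratic factor from the right as an involutory ordinary quadratic map. Concretely, set $\varrho=\alpha_0^{-1}\circ\sigma\circ\alpha_0$. This map is conjugate to $\sigma$, hence involutory and ordinary. Then
\[
\varphi=\alpha_n\circ\sigma\circ\cdots\circ\alpha_1\circ\alpha_0\circ\varrho ,
\]
so $\varphi\circ\varrho$ admits a decomposition with $n-1$ factors $\sigma$. Therefore $\oq(\varphi\circ\varrho)\le n-1$.

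Now take any $p\in\BB(\PP^2)$ and let $\bar p=\bar\varrho(p)$. Lemma \ref{lem:oq+-1} gives $\h_\varphi(p)\le \h_{\varphi\circ\varrho}(\bar p)+1$. By the induction hypothesis applied to $\varphi\circ\varrho$, we have $\h_{\varphi\circ\varrho}(\bar p)\le \oq(\varphi\circ\varrho)\le n-1$. Combining the two bounds gives $\h_\varphi(p)\le n$. Since $p$ was arbitrary, taking the maximum over $p$ proves the claim.

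\emph{Point needing care.} Lemma \ref{lem:oq+-1} is stated for arbitrary Cremona maps, including automorphisms, and $\bar\varrho$ is a bijection on $\BB(\PP^2)$. So the induction never leaves the setting of the lemma. The case $\oq(\varphi\circ\varrho)<n-1$ cannot actually occur by minimality of $n$, but it would only strengthen the bound anyway. The real geometric content is Lemma \ref{lem:oq+-1} itself, that one quadratic map changes heights by at most $1$, which follows from Notation \ref{not:psi(p)} and Proposition \ref{PropMAC118a}. Given that lemma, the proposition is a formal induction.
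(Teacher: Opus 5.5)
Your proof is correct: the induction on $\oq(\varphi)$ works as you set it up, with the first factor written as $\sigma\circ\alpha_0=\alpha_0\circ\varrho$ for the involutory map $\varrho=\alpha_0^{-1}\sigma\alpha_0$, and Lemma \ref{lem:oq+-1} then gives $\h_\varphi(p)\le \h_{\varphi\circ\varrho}(\bar\varrho(p))+1\le n$. The paper does not prove this proposition itself and only cites \cite{CN}; since it places Lemma \ref{lem:oq+-1} immediately before the proposition and cites both together, your induction appears to be the intended argument, though the cited proof is not reproduced here to confirm it.
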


\section{Ordinary quadratic length of some quartic maps}\label{sec:oql}

In next Table \ref{table3} we show the decomposition of some plane Cremona map of degree four in ordinary quadratic maps.

\begin{longtable}[c]{|c|c|}
\hline
\rule{0ex}{2.5ex}$\sharp$ & A decomposition $\alpha_n\circ\textcolor{red}{\sigma}\circ\alpha_{n-1}\circ\textcolor{red}{\sigma}\circ\cdots\circ\alpha_1\circ\textcolor{red}{\sigma}\circ\alpha_0$ \\  \hline

   &  $\alpha_7=[z: x + 2z: y + z], \alpha_6 = [y: -x + z: x], \alpha_5=[x: y - z: z]$ \\
1.1    & $\alpha_4 =[-x + z: x: y + z],\alpha_3=[y: -x - y: x + y + z], \alpha_2=[x: -y + z: y],$ \\
    &  $\alpha_1=[-x: x + y: z], \alpha_0  = [-y + z: y: x]$ \\
\hline
   & $\alpha_6=[x + z: -z: y + z], \alpha_5=[x: y - z: z]$, \\
2.1  &$\alpha_4=[-x - 2y - 4z: x + 2y + 3z: y + z], \alpha_3=[y: -x + z: x],$ \\
  & $\alpha_2=[x: -x - y + z: -z], \alpha_1=[-x: y: x + z], \alpha_0=[-y: -x - y + z: y - z]$ \\
\hline
& $\alpha_7=[2y + z: -2y - 3z: 2x + 2y + 7z], \alpha_6=[-y: -z - 2x: x]$, \\
3.1  &$ \alpha_5=[-2y: -7y + z - 4x: x + 2y],\alpha_4=[-2y - 2z + 2x: 2y + z: z],$ \\
  & $\alpha_3=[y + 2x: -2x - y - z: x],\alpha_2=[z - x - y: x: y]$, \\
  & $\alpha_1=[-x: x + y: z], \alpha_0=[-y + z: y: x]$ \\
\hline
   & $\alpha_6=[-x + y + z: z: x + z], \alpha_5=[-x: y: x + z]$, \\
4.1  &$\alpha_4=[-x: -y + x + z: y], \alpha_3=[y: x + y: z], \alpha_2=[x + y - z: -x + z: x],$ \\
  & $ \alpha_1=[z: y: x + y], \alpha_0=[-y + z: y: x]$ \\
\hline
  & $\alpha_4=[2x + z: z: 2y + z], \alpha_3=[-y: -2z - x: z], \alpha_2=[x - z: x + z: 2y],$ \\[-0.75ex]
\raisebox{1ex}{5.1}  & $\alpha_1=[z: y: x + y], \alpha_0=[-y + z: y: x]$	\\
\hline
   & $\alpha_6=[2y + z: -3z: 9x + y + 11z], \alpha_5=[-3y: -3z - 2x: x]$, \\
6.1  &$\alpha_4=[z: 2x - y + 4z: -x - 2z], \alpha_3=[y + 2x: z: -x],$ \\
  & $\alpha_2=[2y: x - z: x + z], \alpha_1=[z:y: x + y], \alpha_0=[-y + z: y: x]$ \\
\hline
  & $\alpha_4=[-x + y: 4y + 4z: x + y], \alpha_3=[y: -z: x + z],$ \\[-0.75ex]
\raisebox{1ex}{7.1}  & $\alpha_2=[-x + y + z: x - y + z: x + y - z],\alpha_1=[z: y: x + y], \alpha_0=[-y + z: y: x]$	\\
\hline
  & $\alpha_3=[t \left(t y +z \right): -x \left(t +1\right)^{2}: t \left(y -z \right)], \alpha_2=[y \left(t +1\right): -t y +t z +x: x +y -z],$ \\[-0.75ex]
\raisebox{1ex}{8.1}  & $\alpha_1=[z: y: x + y], \alpha_0=[-y + z: y: x]$	\\
\hline
  & $\alpha_4=[y - x: y + z: x - y - z], \alpha_3=[y: -z: x + z],$ \\[-0.75ex]
\raisebox{1ex}{9.1}  & $\alpha_2=[y: -y + z: x + y - z],\alpha_1=[z: y: x + y], \alpha_0=[-y + z: y: x]$	\\
\hline
   & $\alpha_5=[-2x + z: -x: -x - y], \alpha_4=[-z: y + z: x]$, \\
10.1  &$\alpha_3=[x + y: y: z], \alpha_2=[-z: y - x: x + z],$ \\
  & $\alpha_1=[x + z: y + z + x: -x], \alpha_0=[-y + z: x - y: y]$ \\
\hline
   & $\alpha_5=[x + y: -y: y - z], \alpha_4=[-x: -y - z: z]$, \\
12.1  &$\alpha_3=[-y + z: -z: -x], \alpha_2=[-x - y: -y: x + 2y - z],$ \\
  & $\alpha_1=[x: y: y + z], \alpha_0=[-x + y: -y: y - z]$ \\
\hline
   &  $\alpha_7=[z: -y - z: -z - x], \alpha_6 =[-x: -y - z: z], \alpha_5=[-x + z + 3y: x - 2y: y]$ \\
16.2    & $\alpha_4 =[-z: x - z: -x - y],\alpha_3=[y: -x - y + z: x + y], \alpha_2=[y: x - z: z],$ \\
    &  $\alpha_1=[z: y: x + y], \alpha_0  =[-y + z: y: x] $ \\
\hline
 & $\alpha_4=[t^{2} x +z \,t^{2}-2 t x -z t -z u +x +z: 2 t^{2} x -y \,t^{2}+2 z \,t^{2}-4 t x+$\\
   & $+2 y t -4 z t -2 z u +2 x -y +z: t^{2} x +z \,t^{2}-2 t x -z t -z u +x]$, \\
37.1  &$\alpha_3=[-x: -z \,t^{2}+2 z t -y -z: z],$\\
  & $\alpha_2=[z \,t^{3}-t^{2} x +y \,t^{2}-z \,t^{2}-z t u +2 t x -y t -y u -x: -z t -y: z \left(t -1\right)^{3}],$ \\
  & $\alpha_1=[\left(t -1\right) x: t x +y t -y +z: -x -z], \alpha_0=[-x: x - y: x - z]$ \\
\hline
  & $\alpha_4=[uy + z: 2uy + x + y + 2z: uy + y + z], \alpha_3=[y - z: z: x],$ \\[-0.75ex]
\raisebox{1ex}{37.2}  & $\alpha_2=[ux - y + z: -x: y],\alpha_1=[x - y: -x - z: -z], \alpha_0=[-x + z: -x + y: x]$ \\
\hline
\addlinespace 
\caption{Decompositions into ordinary quadratic maps of some quartic planar non-de Jonqui\`eres maps.} \label{table3}
\end{longtable}

\begin{lemma}\label{oq sharp_37.n}
Let $\varphi_{37.n}$ be the map $37.n$ in Table \ref{table4.4} where $n=1,2$. Then, $\oq(\varphi_{37.n}) =4.$
\end{lemma}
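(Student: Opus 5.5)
The plan is to prove the two inequalities $\oq(\varphi_{37.n})\le 4$ and $\oq(\varphi_{37.n})\ge 4$ separately, for $n=1,2$.

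\emph{Upper bound.} Table~\ref{table3} gives, for types $37.1$ and $37.2$, explicit decompositions $\alpha_4\circ\sigma\circ\alpha_3\circ\sigma\circ\alpha_2\circ\sigma\circ\alpha_1\circ\sigma\circ\alpha_0$ with four copies of $\sigma$. For each one I only have to verify symbolically that the product equals the map of Table~\ref{table4.4}, up to a scalar and up to the automorphisms used to normalize $p_6=([1:1:1],t,u)$ (respectively $([1:1:1],0,u)$). This is a routine polynomial computation in the parameters $t,u$, and it gives $\oq\le 4$.

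\emph{Lower bound, first step.} By the normal form in the proof of Theorem~\ref{thm:classification}, the base points are $p_1,p_2,p_3$ proper, $p_4=[1:1:1]$, $p_5\infnear_1 p_4$ and $p_6\infnear_1 p_5$. Hence $\h_{\varphi}(p_6)=3$, and Proposition~\ref{oq>height} gives $\oq\ge 3$. It remains to exclude $\oq=3$.

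\emph{Lower bound, excluding $\oq=3$.} Suppose $\varphi=\alpha_3\circ\sigma\circ\alpha_2\circ\sigma\circ\alpha_1\circ\sigma\circ\alpha_0$, and set $\varrho=\alpha_0^{-1}\circ\sigma\circ\alpha_0$, an involutory ordinary quadratic map. Then $\varphi':=\varphi\circ\varrho$ has $\oq(\varphi')\le 2$.
\begin{itemize}
\item A map of ordinary quadratic length at most $2$ has all base points of height at most $2$, again by Proposition~\ref{oq>height}. By Lemma~\ref{lem:oq+-1}, the height of $\bar\varrho(p_6)$ must therefore drop to exactly $2$.
\item For that to happen, $\varrho$ must have $p_4$ among its base points, since only a proper point that is a base point of $\varrho$ can be ``pushed down''. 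Its other two base points must be proper points of the plane in general position with $p_4$.
\item By Proposition~\ref{PropMAC118a}, $\varphi'$ has degree $4-\varepsilon$ with $\varepsilon=m_4+m_a+m_b-4$, where $\{p_a,p_b\}$ is chosen from $\{p_1,p_2,p_3\}$ or consists of non-base points. The multiplicities $m_i\in\{0,1,2\}$ are read off graph $37$.
\item Running through the finitely many choices, I will show that each resulting $\varphi'$ either has degree $\ge 5$, or has degree $\le 4$ but a homaloidal type incompatible with being a product of two ordinary quadratic maps.
\end{itemize}
For the last point I use the explicit description of $\oq\le 2$ maps. Such a map is of degree $\le 2$, or it is $\sigma\circ\alpha\circ\sigma$, which has degree $3$ or $4$ only when the two triangles share $1$ or $0$ base points. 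Its base points are then proper or of height $2$ with a specific proximity pattern. The main comparison is then between the enriched weighted proximity graph of $\varphi'$, computed via Notation~\ref{not:psi(p)}, and these patterns.

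\emph{Main obstacle.} I expect the last comparison to be the hardest step. The key phenomenon to exploit is that $p_6$ is not satellite and does not lie on the line through $p_4,p_5$; the latter is excluded by the admissible-line conditions of type $37$. After $\varrho$, the image $\bar\varrho(p_6)\infnear_1\bar\varrho(p_5)$ then sits over a point not of the form required in $\sigma\circ\alpha\circ\sigma$: there the height-$2$ points lie in directions of lines joining base points. If the case analysis becomes unwieldy, I would instead apply the same height argument symmetrically to $\varphi^{-1}$, which Table~\ref{table4.3} identifies as again of type $37$. Combining the constraints on the first and last quadratic factors should leave no room for a middle factor, which gives $\oq=4$.
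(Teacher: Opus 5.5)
Your outline matches the paper on everything except the one case that carries the proof, and there your argument rests on a false claim. The agreeing parts are the upper bound from Table~\ref{table3}, the bound $\h_\varphi(p_6)=3$, peeling off an involutory $\varrho$ that must be based at $p_4$, and discarding the choices giving degree $\ge 5$. The remaining case is $\varrho$ based at $p_4$ and two double points $p_i,p_j$ (for $n=2$, not the one on the line through $p_4,p_5$). There Proposition~\ref{PropMAC118a} gives $\varepsilon=1$, so $\varphi'=\varphi\circ\varrho$ is a cubic of homaloidal type $(3;2,1^4)$. That is exactly the homaloidal type of $\sigma\circ\alpha\circ\sigma$ when the two triangles share one vertex, so the homaloidal type rules nothing out.

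The finer criterion you propose, that height-two points of maps with $\oq=2$ lie in directions of lines joining base points, is false. Take $\sigma\circ\alpha\circ\sigma$ whose second triangle has vertices $e_1$, $b_2=[1:s:0]$ with $s\neq0$, and $b_3=[a:b:c]$ with $abc\neq0$. Its base points are $e_1$ (double), $e_2$, $e_3$, $\sigma(b_3)=[bc:ac:ab]$, and a point infinitely near $e_3$ in the direction of the line $x=sy$. For $b\neq sa$ this is not the direction of any line joining $e_3$ to another base point. Hence the facts that $p_6$ is not satellite and not on the line $p_4p_5$ cannot produce the contradiction. The fallback via $\varphi^{-1}$ (``no room for a middle factor'') is not an argument as stated.

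What actually distinguishes $\varphi'$ is a collinearity. By Notation~\ref{not:psi(p)}, $q=\bar\varrho(p_5)$ is a proper point of the side $p_ip_j$ of the triangle of $\varrho$. Moreover, $p_i$ and $p_j$ survive as simple base points of $\varphi'$. So $\varphi'$ has three collinear proper simple base points $p_i,p_j,q$, together with a single height-two point $\bar\varrho(p_6)\infnear_1 q$.

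The only pattern of $\oq=2$ compatible with this is $\sigma\circ\alpha\circ\sigma$ of degree $3$ with exactly one height-two point, lying over a simple point, say $e_3$. That requires $b_2$ on the side $z=0$ and $b_3$ off the triangle. Its proper simple base points are $e_2$, $e_3$, $\sigma(b_3)$. These are never collinear, since $\sigma(b_3)$ also lies off the triangle, in particular off the line $x=0$. Hence $\oq(\varphi')\ge 3$, the desired contradiction.

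With this observation your self-contained route works, and it replaces the paper's appeal to the earlier classification. The paper instead identifies $\varphi'$ as the cubic of type 24 in \cite{CN}, already shown there to have $\oq=3$.
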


\begin{proof}
The enriched weighted proximity graph of $\varphi_{37.n}$ is listed in Table \ref{table4.3}, where $n=1,2$. Note that if $n=2$ then $p_1, p_4, p_5$ are collinear: call $\ell$ such a line.
Since the height of $p_6$ with respect to $\varphi_{37.n}$ is $h_{\varphi_{37.n}}(p_6)=3$, one has $\oq(\varphi_{37.n})\geqslant 3$ by Proposition \ref{oq>height}.
On the other hand, the decomposition of $\varphi_{37.n}$ in Table \ref{table3} implies that $\oq(\varphi_{37.n}) \leqslant 4$.

Suppose by contradiction that $\oq(\varphi_{37.n}) =3$. Then, there should exist an ordinary quadratic map $\rho$ such that $\oq(\varphi_{37.n}\circ\rho^{-1}) = 2$.
We see that $\rho$ must be based at $p_4$, and additionally if $n=2$ then $\rho$ must not base at another point lying on the line $\ell$, otherwise the maximum height of the base points with respect to $\varphi_{37.n}\circ\rho^{-1}$ would be still 3, a contradiction with Proposition \ref{oq>height}.

If the other two base points of $\rho$ are $p_i$ and $p_j$ where $i\neq j$, and $i,j \in \{1,2,3\}$ if $n=1$ or $i,j \in \{2,3\}$ if $n=2$, then $\varphi_{37.n}\circ\rho^{-1}$ is a cubic map equivalent to type 24 in our classification in \cite{CN}, that we showed to have ordinary quadratic length 3, a contradiction.

Otherwise, one sees that $\varphi_{37.n}\circ\rho^{-1}$ has degree $\geqslant5$, and therefore it cannot have ordinary quadratic length 2, another contradiction.

We conclude that $\oq(\varphi_{37.n})=4$ where $n=1,2$. 
\end{proof}

The following result will be useful to compute ordinary quadratic length of plane Cremona maps.

\begin{lemma}
Let $\varphi$ be a plane Cremona map with $p_1, \ldots, p_r$ being the base points of multiplicities $m_1, \ldots,m_r$ respectively, such that $p_1$ is a proper point, $p_{i+1}$ is infinitely near $p_i$ of order 1 for any $i=1,\ldots,r-1$, i.e.,
\begin{center}
\scalebox{0.5}{%
\begin{tikzpicture}[->,>=stealth',shorten >=2pt,auto,node distance=2.5cm,ultra thick,baseline=-1.25ex]
\tikzstyle{every state}=[fill=white,draw=black,text=black]
\node[state,draw=red,text=red,label=below: $p_1$] (A)              {\huge $m_1$};
\node[state,label=below: $p_2$] (B) [right of=A] {\huge $m_2$};
\node[state,label=below: $p_3$] (C) [right of=B] {\huge $m_3$};
\node[state, draw=none] (D) [right of=C] {\huge  $\ldots$};
\node[state,label=below: $p_r$] (E) [right of=D] {\huge  $m_r$};
\path (D) edge (C);
\path (C) edge (B);
\path (B) edge (A);
\path (E) edge (D);
\end{tikzpicture}%
}.
\end{center}
Suppose that $\oq(\varphi) = r$. Then, $m_r = 1$ and $\deg(\varphi) \leq m_1+\cdots+m_r+1$.
\end{lemma}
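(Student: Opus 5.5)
We argue by induction on $r$. To make the induction close, we prove a slightly more general statement: \emph{$\varphi$ may have, besides the chain $p_1,\dots,p_r$, further base points, but all of them proper (height $1$)}. The conclusion is the same: if $\oq(\varphi)=r$ then $m_r=1$ and $\deg\varphi\le m_1+\cdots+m_r+1$. The lemma as stated is the special case with no extra points.

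\emph{Base case $r=1$.} Here $\varphi$ is equivalent to $\sigma$. So $d=2$ and every base point has multiplicity $1$, giving $m_1=1$ and $d=2\le m_1+1$.

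\emph{Step 1: write $\varphi=\varphi'\circ\varrho$.} Take a minimal decomposition $\varphi=\psi_r\circ\cdots\circ\psi_1$, with $r\ge 2$. Set $\varrho=\psi_1$; up to automorphisms we may treat $\varrho$ as involutory, and we put $\varphi'=\varphi\circ\varrho^{-1}$, so that $\oq(\varphi')=r-1$. The point $p_r$ has height $r$ with respect to $\varphi$. By Lemma~\ref{lem:oq+-1}, $\bar\varrho(p_r)$ has height at least $r-1$ with respect to $\varphi'$. By Proposition~\ref{oq>height} this height is at most $\oq(\varphi')=r-1$, hence exactly $r-1$.

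\emph{Step 2: $\varrho$ is based at $p_1$, and $\varphi'$ is again of the required shape.} Heights only decrease when the chain's root is a base point of $\varrho$, so $\varrho$ must be based at $p_1$. Its other two base points $q,q'$ are proper points, hence either non-base points of $\varphi$ or extra proper base points of $\varphi$, with multiplicities $a,b\ge0$.

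By Proposition~\ref{PropMAC118a}, $\varphi'$ has degree
\[
d'=d-\varepsilon=2d-m_1-a-b .
\]
The points $\bar\varrho(p_2),\dots,\bar\varrho(p_r)$ form a chain with $\bar\varrho(p_2)$ proper and multiplicities $m_2,\dots,m_r$. All remaining base points of $\varphi'$ are proper: $p_1,q,q'$ and the images of the extra proper points. The latter stay proper or become points of height $\le 2$. I expect this to be the delicate point: one must check, again by Lemma~\ref{lem:oq+-1} and the fact that $\oq(\varphi')=r-1$, that no new chain of length $\ge2$ forms outside the main one. Otherwise the generalized hypothesis has to be relaxed to allow extra points of bounded height, which would not affect the degree count below.

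\emph{Step 3: conclude.} The induction hypothesis applied to $\varphi'$, with chain length $r-1$, gives $m_r=1$ and
\[
2d-m_1-a-b\le m_2+\cdots+m_r+1 .
\]
Since $q$ and $q'$ are distinct proper points, the line through them meets a general member of the homaloidal net with intersection number $d\ge a+b$; this is Bézout together with irreducibility of the general member. Adding $d\ge a+b$ to the previous inequality yields $d\le m_1+\cdots+m_r+1$, as claimed.

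The main obstacle is Step 2: proving rigorously that the forced choice of $\varrho$ keeps $\varphi'$ in the class covered by the induction hypothesis. This requires tracking the heights of all base points via Notation~\ref{not:psi(p)}, in particular points on the sides of the triangle of $\varrho$ that become infinitely near $p_1$, $q$ or $q'$.
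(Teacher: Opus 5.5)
\textbf{Gap: the inductive class is not closed.} Your height argument (Step 1 and the first claim of Step 2) and your degree count in Step 3 are the paper's argument: the first quadratic map must be based at $p_1$, and $\varepsilon=m_1+a+b-d\le m_1$ because $a+b\le d$. The gap is the inductive framework. You induct on maps with the chain plus only \emph{proper} further base points, and this class is not stable under $\varphi\mapsto\varphi\circ\varrho^{-1}$. By Notation \ref{not:psi(p)}, a proper base point lying on a side of the triangle of $\varrho$, away from the vertices, is sent by $\bar\varrho$ to a point infinitely near the opposite vertex.

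This is not an artificial case; it arises already from the lemma as stated. The map $\varphi'$ has the proper base points $p_1$ (multiplicity $d-a-b$) and $q,q'$ (multiplicities $a-\varepsilon$, $b-\varepsilon$). Nothing prevents the next quadratic map, based at $\bar\varrho(p_2)$ and two other points, from having $p_1$ on the side opposite $\bar\varrho(p_2)$. The following map then has a base point of height $2$ off the chain. So the inductive step fails for members of your class, and you leave exactly this unresolved.

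\textbf{Fix.} Drop every condition on the other base points; none of your steps uses one. The step needs only the following facts.
\begin{enumerate}[$(i)$]
\item $\varrho$ is based at $p_1$ and $\bar\varrho(p_2)$ is proper. The second part follows from the exact height $r-1$ in Step 1, and you should say so. If $q$ or $q'$ lay on the line through $p_1$ in the direction of $p_2$, then $\bar\varrho(p_2)$ would be infinitely near one of $q,q'$, and $\bar\varrho(p_r)$ would keep height $r$.
\item $\varphi'$ has multiplicity $m_i$ at $\bar\varrho(p_i)$ for $i\ge 2$, by Proposition \ref{PropMAC118a}.
\item $a+b\le d$, since $q\ne q'$ are proper points.
\end{enumerate}
State the claim as ``$\varphi$ has such a chain among its base points, the others being arbitrary''. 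Your induction then closes with no further work.

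\textbf{Comparison with the paper.} With this fix, your proof is the paper's proof written recursively. The paper iterates along a minimal decomposition $\rho_r\circ\cdots\circ\rho_1$ and shows each $\rho_i$ is based at $\bar p_i$. It gets $\varepsilon_i\le m_i$ at every stage and sums to $d\le d_r+m_r+\cdots+m_1=1+m_1+\cdots+m_r$. This general form is also the one the paper needs. Lemma \ref{bounded_degree} is applied in Lemmas \ref{oq sharp_m.1} and \ref{oq sharp_k.1} to maps with many base points off the chain. So the lemma should not be read as the special case with no extra base points.
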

\begin{proof}
Since $\oq(\varphi) = r$, there exist $\rho_1, \ldots, \rho_r$ are ordinary quardratic maps such that $\varphi = \rho_r \circ \ldots \circ \rho_1$. One can see that $\rho_1$ must have base point $p_1$ and $\rho_i$ must have base point $\bar{p_i}$ where $\bar{p_i}$ is the image of $p_i$ via $\varphi \circ \rho_1^{-1} \circ \ldots \circ \rho_{i-1}^{-1}$ (for any $i=2, \ldots, r$), otherwise the maximum height of the base points of the map $\varphi \circ \rho_1^{-1} \circ \ldots \circ \rho_i^{-1} = \rho_r \circ \ldots \circ \rho_{i+1}$ is $r-i+1$, meanwhile $\oq(\varphi \circ \rho_1^{-1} \circ \ldots \circ \rho_i^{-1}) = r-i$ (for any $i=1,\ldots,r$), which contradicts Proposition \ref{oq>height}. Moreover, by Notation \ref{not:psi(p)}, one has $\mult_{\varphi \circ \rho_1^{-1} \circ \ldots \circ \rho_{i-1}^{-1}}(\bar{p_i}) = m_i$ for any $i=2, \ldots,r$. In particular, $\bar{p_r}$ is a base point of $\varphi \circ \rho_1^{-1} \circ \ldots \circ \rho_{r-1}^{-1} = \rho_r$, which is an ordinary quadratic map, of multiplicity $m_r$, so that $m_r=1$.\\

Now $\rho_1$ has base point $p_1$ and two other base points $q_1,t_1$ where $\mult_{\varphi }(q_1) = n_1$ and  $\mult_{\varphi}(t_1) = k_1$. By Proposition \ref{PropMAC118a}, the map $\varphi \circ \rho_1^{-1} =  \rho_r \circ \ldots \circ \rho_2$ has degree $d_1 = d- \varepsilon_1$, where $d = \deg(\varphi)$ and $\varepsilon_1 = m_1+n_1+k_1 -d$. Since $d\geq n_1+k_1$, we get $\varepsilon_1 \leq m_1$. Therefore, $$d=d_1+\varepsilon_1 \leq d_1 + m_1.$$

Similarly, for each $i=2,\ldots,r$, $\rho_i$ has base point $\bar{p_i}$ and two other base points $q_i,t_i$ where $\mult_{\varphi \circ \rho_1^{-1} \circ \ldots \circ \rho_{i-1}^{-1}}(q_i) = n_i$ and $\mult_{\varphi \circ \rho_1^{-1} \circ \ldots \circ \rho_{i-1}^{-1}}(t_i) = k_i$.  By Proposition \ref{PropMAC118a}, the map $\varphi \circ \rho_1^{-1}\circ \ldots \circ \rho_i^{-1} =  \rho_r \circ \ldots \circ \rho_{i+1}$ has degree $d_i = d_{i-1}- \varepsilon_i$, where $d_{i-1} = \deg(\varphi \circ \rho_1^{-1}\circ \ldots \circ \rho_{i-1}^{-1})$ and $\varepsilon_i = m_i+n_i+k_i -d_{i-1}$. Since $d_{i-1}\geq n_i+k_i$, we get $\varepsilon_i \leq m_i$. Therefore, $$d_{i-1}=d_i+\varepsilon_i \leq d_i + m_i.$$

Hence, we obtain
$$d\leq d_1 + m_1 \leq (d_2 + m_2) + m_1 \leq \cdots \leq (d_r + m_r) + \cdots +m_1 = 1+m_r + \cdots +m_1.$$
\end{proof}

\begin{remark}
The proof of the previous lemma shows that one could also assume that there are other proximity relations among the points $p_1, \ldots, p_r$.
\end{remark}

The following Lemmas are special cases of the previous one.

\begin{lemma}\label{bounded_degree}
\begin{itemize}
\item[$(i)$] Let $\varphi$ be a plane Cremona map with $p_1, p_2$ being the simple base points, such that $p_1$ is proper point and $p_2$ is infinitely near $p_1$ of order 1, i.e.,
\begin{center}
\scalebox{0.5}{%
\begin{tikzpicture}[->,>=stealth',shorten >=2pt,auto,node distance=2.5cm,ultra thick,baseline=-1.25ex]
\tikzstyle{every state}=[fill=white,draw=black,text=black]
\node[state,draw=red,text=red,label=below: $p_1$] (A)              {\huge 1};
\node[state,label=below: $p_2$] (B) [right of=A] {\huge 1};
\path (B) edge (A);
\end{tikzpicture}%
}.
\end{center}
Suppose that $\oq(\varphi) = 2$. Then, $\deg(\varphi) \leq 3$. 
\item[$(ii)$] Let $\varphi$ be a plane Cremona map with $p_1, p_2,p_3$ being the simple base points, such that $p_1$ is proper point and $p_{i+1}$ is infinitely near $p_i$ of order 1 for any $i=1,2$, i.e., 
\begin{center}
\scalebox{0.5}{%
\begin{tikzpicture}[->,>=stealth',shorten >=2pt,auto,node distance=2.5cm,ultra thick,baseline=-1.25ex]
\tikzstyle{every state}=[fill=white,draw=black,text=black]
\node[state,draw=red,text=red,label=below: $p_1$] (A)              {\huge 1};
\node[state,label=below: $p_2$] (B) [right of=A] {\huge 1};
\node[state,label=below: $p_3$] (C) [right of=B] {\huge 1};
\path (C) edge (B);
\path (B) edge (A);
\end{tikzpicture}%
}.
\end{center}
Suppose that $\oq(\varphi) = 3$. Then, $\deg(\varphi) \leq 4$.
\item[$(iii)$] Let $\varphi$ be a plane Cremona map with $p_1, p_2,p_3,p_4$ being the base points, where $p_1$ is proper point of multiplicity 2, all others are simple base points, $p_{i+1}$ is infinitely near $p_i$ of order 1 for any $i=1,2,3$ and $p_3$ may or may not proximate to $p_1$, i.e., 
\begin{center}
\scalebox{0.5}{%
\begin{tikzpicture}[->,>=stealth',shorten >=2pt,auto,node distance=2.5cm,ultra thick,baseline=-1.25ex]
\tikzstyle{every state}=[fill=white,draw=black,text=black]
\node[state,draw=red,text=red,label=below: $p_1$] (A)              {\huge 2};
\node[state,label=below: $p_2$] (B) [right of=A] {\huge 1};
\node[state,label=below: $p_3$] (C) [right of=B] {\huge 1};
\node[state,label=below: $p_4$] (D) [right of=C] {\huge 1};
\path (D) edge (C);
\path (C) edge (B);
\path (B) edge (A);
\path (C) edge[bend right,dashed] (A);
\end{tikzpicture}%
}.
\end{center}
Suppose that $\oq(\varphi) = 4$. Then, $\deg(\varphi) \leq 6$.
\end{itemize}
\end{lemma}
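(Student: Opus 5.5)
Lemma \ref{bounded_degree} consists of three special cases of the previous lemma, so the plan is to apply it directly in each case, choosing the chain $p_1\infnear_1 p_2\infnear_1\cdots$ and reading off the degree bound $\deg(\varphi)\le m_1+\cdots+m_r+1$. The only point to check is that the hypothesis $\oq(\varphi)=r$ of the previous lemma matches the length of the chain in each item. The extra proximity allowed in $(iii)$ is covered by the remark following the previous lemma.

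For $(i)$ we take $r=2$ and $m_1=m_2=1$. The chain $p_2\infnear_1 p_1$ gives height $\h_\varphi(p_2)=2=\oq(\varphi)$, so the previous lemma applies and yields $\deg(\varphi)\le 1+1+1=3$.

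For $(ii)$ we take $r=3$ and $m_1=m_2=m_3=1$. The lemma gives $\deg(\varphi)\le 4$.

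For $(iii)$ we take $r=4$ with $m_1=2$ and $m_2=m_3=m_4=1$. The lemma gives $\deg(\varphi)\le 2+1+1+1+1=6$; we also recover the consistency condition $m_4=1$. Here $p_3$ may additionally be proximate to $p_1$ (satellite). This does not affect the argument, for the following reasons.
\begin{itemize}
\item In the proof of the previous lemma, each $\rho_i$ must be based at the image $\bar p_i$ of the $i$-th point of the chain. This is forced by Proposition \ref{oq>height} together with the height bookkeeping of Lemma \ref{lem:oq+-1}, and it uses only the infinitesimal orders, not the absence of further proximities.
\item The degree drop at each step is $\varepsilon_i=m_i+n_i+k_i-d_{i-1}\le m_i$, since $n_i+k_i\le d_{i-1}$ for any two base points of a homaloidal system. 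Again, this is independent of the proximity structure.
\end{itemize}

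The step most likely to cause trouble is justifying in $(iii)$ that, after composing with $\rho_1^{-1}$, the point $\bar p_2$ is proper of multiplicity $m_2$ and the chain $\bar p_4\infnear_1\bar p_3\infnear_1\bar p_2$ persists, even when $p_3\satel p_1$. The plan is to invoke Proposition \ref{PropMAC118a} and Notation \ref{not:psi(p)}. Since $p_2$ is infinitely near the base point $p_1$ of $\rho_1$, in a direction that is not along the sides of the triangle (otherwise the heights would not drop), it corresponds to a proper point of the opposite side, with multiplicity $m_2$ preserved. Higher points of the chain keep their infinitesimal orders relative to it. The satellite relation only changes which proximities hold afterwards, and by the remark these do not matter. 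So the inequality chain $d\le d_1+m_1\le\cdots\le 1+\sum m_i$ goes through unchanged.
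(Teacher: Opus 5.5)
Your proposal is correct and is the paper's argument: the paper states that (i)--(iii) are the cases $r=2,3,4$ of the preceding lemma, which give $\deg(\varphi)\le \sum m_i+1 = 3,4,6$, and it relies on the remark after that lemma for the optional proximity $p_3\odot p_1$ in (iii). Your extra checks are a correct elaboration of why that remark applies: that $\bar p_2$ becomes a proper point of multiplicity $m_2$ after $\rho_1$, and that the satellite relation only affects later proximities, not the degree bookkeeping.
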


\begin{lemma}\label{oq sharp_m.1}
Let $\varphi_{m.1}$ be the map m.1 in Table \ref{table4.4} where $m=10,12$. Then, $\oq(\varphi_{m.1})=5.$
\end{lemma}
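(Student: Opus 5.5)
The upper bound $\oq(\varphi_{m.1})\le 5$ for $m=10,12$ comes directly from the explicit decompositions in Table \ref{table3}, each of which uses five copies of $\sigma$. So the work is the lower bound $\oq(\varphi_{m.1})\ge 5$. I would argue by contradiction, following the pattern of Lemma \ref{oq sharp_37.n} but using the degree bounds of Lemma \ref{bounded_degree}.

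First I would read off from Table \ref{table4.3} the enriched weighted proximity graphs of types 10.1 and 12.1. Both contain a chain of base points $p\infnear_1\cdot\infnear_1\cdot\infnear_1\cdot$ starting at a proper point. I expect the maximal height to be $4$, so Proposition \ref{oq>height} gives only $\oq\ge 4$. For type 10 this is the chain at $[1:0:0]$ with standard coordinates $(p,0,\infty,-1)$; for type 12 it is the chain $(p,0,-1,1)$ or $(p,0,0,-1)$.

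Now suppose $\oq(\varphi_{m.1})=4$. Then $\varphi=\rho_4\circ\cdots\circ\rho_1$, and, as in the proof of the general chain lemma above, each $\rho_i$ must be based at the image of the $i$-th point of the chain. Otherwise some composite $\rho_4\circ\cdots\circ\rho_{i+1}$ would have ordinary quadratic length $4-i$ but a base point of height $5-i$, contradicting Proposition \ref{oq>height}.

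The key step is to apply that general lemma to the chain of length $r=4$ and obtain a contradiction:
\begin{itemize}
\item If the last point of the chain has multiplicity $2$, the lemma's conclusion $m_r=1$ already fails.
\item If the chain is $2,1,1,1$ as in Lemma \ref{bounded_degree}$(iii)$, that lemma gives only $\deg\le 6$, which is too weak.
\end{itemize}
So the crude bound will not suffice, and I would sharpen it. The lemma's proof uses $\varepsilon_i\le m_i$, with equality only if the other two base points $q_i,t_i$ of $\rho_i$ have multiplicities summing to $d_{i-1}$. For degree $4$ with multiplicities $(2^3,1^3)$, the first step from a double point $p_1$ would need $\varepsilon_1=m_1+n_1+k_1-4$. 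To drop to degree $\le 3$ we need $n_1+k_1\ge 3$, so one of the other base points of $\rho_1$ is a double point. Moreover $\rho_1$, being ordinary, needs its three base points proper and non-collinear.

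I would then track degrees and multiplicities. After $\rho_1$ the map is cubic, and the chain points $p_2,p_3,p_4$ become a chain starting at a proper point $\bar p_2$ of height $1$. The resulting cubic would need $\oq=3$ with a chain of three base points in the pattern of Lemma \ref{bounded_degree}$(ii)$ or with a double point. I would identify it, via Proposition \ref{PropMAC118a}, with a cubic type from \cite{CN} and check that its ordinary quadratic length is $\ge 4$, or that its degree and the heights are incompatible. This gives the contradiction.

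The main obstacle is this case analysis over the possible choices of the two extra base points of $\rho_1$. These must be proper points, since only proper non-collinear base points are allowed for an ordinary $\rho_1$, and they are constrained by the proximity structure of graphs 10 and 12. One must check in each case either that the degree does not drop enough, so the remaining three steps cannot reach degree $1$ by the general chain lemma, or that the resulting cubic has the wrong type. I would enumerate the proper base points of each graph and handle each choice, using the list of cubic types and their lengths from \cite{CN}.
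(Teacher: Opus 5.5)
There is a genuine gap: your degree threshold is off by one, and because of it you miss the only case that actually needs work.

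After $\rho_1$, the three remaining steps must deal with the chain of simple points $\bar p_4,\bar p_5,\bar p_6$. Lemma \ref{bounded_degree}$(ii)$ gives only $\deg(\varphi\circ\rho_1^{-1})\le 4$, not $\le 3$. Your sharpened inequality says the same: $d_1\le d_2+1\le d_3+2=4$.

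Moreover, in types 10.1 and 12.1 the only proper base points are the two double points $p_1=[0:0:1]$ and $p_3=[1:0:0]$. The point $p_2\infnear_1 p_1$ is double, and $p_4,p_5,p_6$ form a chain of simple points over $p_3$. So $n_1+k_1\le 2$, the degree never drops below $4$, and no cubic ever arises; the case analysis over cubic types from \cite{CN} that you plan is empty.

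The true dichotomy, once $\rho_1$ is based at $p_3$ and at no point of the line $p_3p_4$, is:
\begin{enumerate}[$(a)$]
\item $\rho_1$ is based at two further non-base points. Then the degree is $6$, which Lemma \ref{bounded_degree}$(ii)$ excludes, as you anticipate.
\item $\rho_1$ is based at $p_1$ and one non-base point. Then $\varepsilon_1=2+2+0-4=0$ and the degree stays exactly $4$.
\end{enumerate}

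Case $(b)$ is where your plan fails. The quartic $\varphi\circ\rho_1^{-1}$ has maximal height $3$ and degree $4\le 4$, so it satisfies every degree and height bound. Neither ``the degree does not drop enough'' nor ``the resulting cubic has the wrong type'' applies, and your argument as written would wrongly discard this case.

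The paper identifies this quartic as type 37.$n$: $n=2$ for $m=10$, because $p_5\prox p_3$ becomes the collinearity of $p_1,\bar p_4,\bar p_5$, and $n=1$ for $m=12$. It then invokes Lemma \ref{oq sharp_37.n}, namely $\oq(\varphi_{37.n})=4\neq 3$.

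That lemma is one more iteration of the same descent. The next quadratic map must be based at the proper simple point heading the height-$3$ chain. By Lemma \ref{bounded_degree}$(i)$ it must also pass through two proper double points, which yields the cubic of type 24 of \cite{CN}, of ordinary quadratic length $3$. Without this extra step, which is where the cubic classification of \cite{CN} really enters, your proof does not exclude $\oq(\varphi_{m.1})=4$.
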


\begin{proof}
The enriched weighted proximity graph of $\varphi_{m.1}$ is listed in Table \ref{table4.3}, where $m=10,12$. Note that the height of $p_6$ with respect to $\varphi_{m.1}$ is 4, hence $\oq(\varphi_{m.1})\geqslant4$ by Proposition \ref{oq>height}.
On the other hand, the decomposition of $\varphi_{m.1}$ in Table \ref{table3} implies that $\oq(\varphi_{m.1})\leqslant5$.

Suppose by contradiction that $\oq(\varphi_{m.1}) =4$. Then, there should exist an ordinary quadratic map $\rho$ such that $\oq(\varphi_{m.1}\circ\rho^{-1}) = 3$. In particular, $\rho$ must be based at $p_3$ and not at a point lying on the line passing through $p_3$ and $p_4$, otherwise the maximum height of the base points with respect to $\varphi_{m.1}\circ\rho^{-1}$ would be still 4, a contradiction with Proposition \ref{oq>height}. There are two cases, either $\rho$ is based at $p_1$ or $\rho$ is not based at $p_1$.

In the former case, the map $\varphi_{m.1}\circ\rho^{-1}$ has degree 4 and its enriched weighted proximity graph is type 37.n  in our table, where $n=2$ if $m=10$ and $n=1$ if $m=12$. By Lemma \ref{oq sharp_37.n}, we have $\oq(\varphi_{m.1}\circ\rho^{-1}) = 4$, a contradiction with the hypothesis.

In the latter case, the map $\varphi_{m.1}\circ\rho^{-1}$ has degree 6 and its weighted proximity graph is:
\begin{center}
\scalebox{0.5}{%
\begin{tikzpicture}[->,>=stealth',shorten >=2pt,auto,node distance=2.5cm,ultra thick,baseline=-1.25ex]
  \tikzstyle{every state}=[fill=white,draw=black,text=black]

  \node[state,draw=red,text=red] (A)              {\huge 4};
  \node[state,draw=red,text=red] (B) [right of=A] {\huge 2};
  \node[state] (C) [right of=B] {\huge 2};
  \node[state,draw=red,text=red] (D) [right of=C] {\huge 2};
  \node[state,draw=red,text=red] (E) [right of=D] {\huge 2};
  \node[state,draw=red,text=red] (F) [right of=E] {\huge 1};
  \node[state] (G) [right of=F] {\huge 1};
  \node[state] (H) [right of=G] {\huge 1};
\path (H) edge (G);
\path (G) edge (F);
\path (C) edge (B);
\end{tikzpicture}%
}
\end{center}
a contradiction with Lemma \ref{bounded_degree} $(ii)$.

Hence, we conclude that $\oq(\varphi_{m.1})=5$ where $m=10,12$.
\end{proof}

\begin{lemma}\label{oq sharp_k.1}
Let $\varphi_{k.1}$ be the map $k.1$ in Table \ref{table4.4} where $k=1,3$. Then, $\oq(\varphi_{k.1}) =7.$
\end{lemma}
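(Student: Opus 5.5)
Types 1.1 and 3.1 have graphs in which all six base points form a single chain over the proper point $p_1$: $p_6\infnear_1 p_5\infnear_1\cdots\infnear_1 p_1$. For 1.1 the standard coordinates are $p_6=([1:0:0],0,-1,0,\infty,1)$, and for 3.1 they are $([1:0:0],0,-1,0,0,1)$. Hence $\h(p_6)=6$, and Proposition \ref{oq>height} gives $\oq(\varphi_{k.1})\geq 6$. The decompositions in Table \ref{table3} give $\oq(\varphi_{k.1})\leq 7$. It therefore remains to exclude $\oq(\varphi_{k.1})=6$.

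Assume $\oq=6$ and argue as in the previous lemma on chains. Write $\varphi=\rho_6\circ\cdots\circ\rho_1$. Each $\rho_i$ must be based at the image $\bar p_i$ of the current top of the chain; otherwise the maximal height would not drop, contradicting Proposition \ref{oq>height}. Moreover, the chain structure must be preserved at each step. In particular $\rho_1$ is based at $p_1$. Its other two base points $q,t$ must be proper points, and no proper point other than $p_1$ is a base point of $\varphi$. So $q,t$ have multiplicity $0$, and Proposition \ref{PropMAC118a} gives $\varepsilon_1=m_1-4\leq 2-4<0$. Thus $\varphi\circ\rho_1^{-1}$ has degree $4-\varepsilon_1=6$, and the multiplicity at $p_1$ becomes $m_1-\varepsilon_1=4$.

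The key quantitative step comes next. After $\rho_1$, the new map has degree $6$ and a chain of height $5$, and it must have $\oq=5$. Iterating the estimate $d_{i-1}\leq d_i+m_i$ from the proof of the chain lemma only gives $d\leq 1+\sum m_i=10$ at the start, which is too weak. The estimate must be sharpened using the fact that at every intermediate step the only proper base points available are $\bar p_i$ and points created by earlier $\rho_j$. Concretely, I would track the degree sequence $d_0=4,d_1,\dots,d_6=1$ with $d_{i}=d_{i-1}-\varepsilon_i$ and $\varepsilon_i=m^{(i)}(\bar p_i)+n_i+k_i-d_{i-1}$. Here $n_i,k_i$ are the multiplicities at the two other base points of $\rho_i$, which must be proper points of the current map. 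Using the proximity inequalities and the multiplicities $2,2,2,1,1,1$ along the chain, I would show that the degree cannot get back down to $1$ in five more steps.

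Two cases remain open in this bookkeeping. One is where $\rho_2$ is based at $\bar p_2$ together with the proper point of multiplicity $4$ created above, which makes $\varepsilon_2$ as large as possible. The other is where it is based at points of multiplicity $\leq 2$. In each case I would compute the resulting weighted proximity graph, as in Lemma \ref{oq sharp_m.1}, and reduce to a map whose remaining chain still has height $h$ but whose degree exceeds the bound of the chain lemma (applied to the remaining chain with its multiplicities), or whose last chain point has multiplicity $>1$. Lemma \ref{bounded_degree} $(iii)$ and $(ii)$ is the intended tool for the last steps: a chain $2,1,1,1$ with $\oq=4$ forces degree $\leq 6$, and a chain $1,1,1$ with $\oq=3$ forces degree $\leq 4$.

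The main obstacle is this case analysis in the first two or three steps. There, the choice of the two auxiliary base points of $\rho_i$, and whether they are collinear with or proximate to chain points, changes both the degree and which chain points become proper. I expect the satellite and collinearity data to be essential. For 1.1 the point $p_5=(\dots,\infty)$ is satellite, and for 3.1 the conic $xz+y^2$ passes through $p_1,\dots,p_5$. This data rules out the favourable configurations, which would otherwise let some intermediate map fall into a type of height $4$ and $\oq=4$, such as types 37.n via Lemma \ref{oq sharp_37.n}, or types 10.1 and 12.1 via Lemma \ref{oq sharp_m.1} with $\oq=5$. Ruling these out shows that every path needs a seventh step, so $\oq(\varphi_{k.1})=7$.
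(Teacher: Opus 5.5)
Your opening agrees with the paper: $\h(p_6)=6$, the upper bound $7$ from Table \ref{table3}, and the computation that $\rho_1$ is based at $p_1$ and two proper non-base points, giving a sextic with a point of multiplicity $4$. You also need, and do not state, that neither auxiliary point may lie on the line through $p_1$ in the direction of $p_2$. Otherwise $\bar\rho_1(p_2)$ is infinitely near an auxiliary point and the height stays $6$. This condition fixes the graph of $\varphi\circ\rho_1^{-1}$: a point $p'_0$ of multiplicity $4$, three aligned proper double points $p'_1,p'_2,p'_3$, and a chain $p'_4,\dots,p'_7$ of multiplicities $2,1,1,1$ over $p'_3$.

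The genuine gap is that your proposal stops where the actual argument begins, and the mechanism you suggest for finishing cannot work. Degree bookkeeping with proximity inequalities does not close every branch, because in two branches all numerical constraints are satisfied.

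\emph{First branch.} If $\rho_2$ is based at $p'_3$, $p'_0$ and one of $p'_1,p'_2$, then $\varepsilon_2=2+4+2-6=2$. You land on a quartic of type $10.1$ (for $k=1$) or $12.1$ (for $k=3$). Its chain $2,1,1,1$ of height $4$ is compatible with Lemma \ref{bounded_degree}$(iii)$, since $4\le 6$.

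\emph{Second branch.} Suppose $\rho_2$ is based at $p'_3,p'_0$ and a new point, so the degree stays $6$. If then $\rho_3$ is based at $p''_4$, $p''_0$ and one of the aligned $p''_1,p''_2,p''_3$, you land on a quartic of type $37.n$. Its chain $1,1,1$ is compatible with Lemma \ref{bounded_degree}$(ii)$.

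These configurations are not ruled out by the satellite or conic data. They genuinely occur, and that data only decides whether you land in $10.1$/$37.2$ or in $12.1$/$37.1$. They are excluded because $\oq=5>4$ for types $10.1,12.1$ (Lemma \ref{oq sharp_m.1}) and $\oq=4>3$ for types $37.n$ (Lemma \ref{oq sharp_37.n}). The latter in turn rests on the cubic type $24$ of \cite{CN} having $\oq=3$. Note also that $37.n$ has height $3$, not $4$ as you wrote.

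Only the remaining branches are purely numerical:
\begin{itemize}
\item $\rho_2$ not based at $p'_0$ gives degree $\ge 8$ with a chain $2,1,1,1$, contradicting Lemma \ref{bounded_degree}$(iii)$.
\item $\rho_3$ not based at $p''_0$, or based at $p''_0$ with third point outside $\{p''_1,p''_2,p''_3\}$, gives degree $\ge 6$ with a chain $1,1,1$, contradicting Lemma \ref{bounded_degree}$(ii)$.
\end{itemize}
Without this explicit case tree, and without invoking the exact values of $\oq$ for types $10.1$, $12.1$ and $37.n$, your proposal does not exclude $\oq(\varphi_{k.1})=6$.
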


\begin{proof}
The enriched weighted proximity graph of $\varphi_{k.1}$ is listed in Table \ref{table4.3}, where $k=1,3$. Since the height of $p_6$ with respect to $\varphi_{k.1}$ is 6, one has $\oq(\varphi_{k.1})\geqslant6$ by Proposition  \ref{oq>height}.
On the other hand, the decomposition of $\varphi_{k.1}$ in Table \ref{table3} implies that $\oq(\varphi_{k.1}) \leqslant 7$.

Suppose by contradiction that $\oq(\varphi_{k.1}) =6$. Then, there should exist an ordinary quadratic map $\rho_1$ such that $\oq(\varphi_{k.1}\circ\rho^{-1}_1) = 5$. In particular, $\rho_1$ must be based at $p_1$ and not at a point lying on the line passing through $p_1$ and $p_2$, otherwise the maximum height of the base points with respect to $\varphi_{k.1}\circ\rho^{-1}_1$ would be still 6, a contradiction with Proposition \ref{oq>height}. So the map $\varphi_{k.1}\circ\rho_1^{-1}$ has degree 6 and its enriched weighted proximity graph is:
\begin{center}
\scalebox{0.5}{%
\begin{tikzpicture}[->,>=stealth',shorten >=2pt,auto,node distance=2.5cm,ultra thick,baseline=-1.25ex]
  \tikzstyle{every state}=[fill=white,draw=black,text=black]

  \node[state,draw=red,text=red,label=below: $p'_0$] (A)              {\huge 4};
  \node[state,draw=red,text=red,label=below: $p'_1$] (B) [right of=A] {\huge 2};
  \node[state,draw=red,text=red,label=below: $p'_2$] (C) [right of=B] {\huge 2};
  \node[state,draw=red,text=red,label=below: $p'_3$] (D) [right of=C] {\huge 2};
  \node[state,label=below: $p'_4$] (E) [right of=D] {\huge 2};
  \node[state,label=below: $p'_5$] (F) [right of=E] {\huge 1};
  \node[state,label=below: $p'_6$] (G) [right of=F] {\huge 1};
  \node[state,label=below: $p'_7$] (H) [right of=G] {\huge 1};
\path (G) edge[bend right,dashed] (E);
\path (H) edge (G);
\path (G) edge (F);
\path (F) edge (E);
\path (E) edge (D);
\end{tikzpicture}%
}
\end{center}
where $p'_1, p'_2, p'_3$ are aligned, and $p'_6$ proximates $p'_4$ only if $k=1$. Then, there should exist an ordinary quadratic map $\rho_2$ such that $\oq(\varphi_{k.1}\circ\rho_1^{-1}\circ\rho_2^{-1}) =4$. In particular, the map $\rho_2$ must be based at $p'_3$ and not at a point lying on the line passing through $p'_3$ and $p'_4$, otherwise the maximum height of the base points of the map $\varphi_{k.1}\circ\rho_1^{-1}\circ\rho_2^{-1}$ is 5, a contradiction with Proposition \ref{oq>height}.

There are now two cases: either $\rho_2$ is based at $p'_0$ or $\rho_2$ is not based at $p'_0$.
In the latter case, 
the map $\varphi_{k.1}\circ\rho_1^{-1}\circ\rho_2^{-1}$ has degree $\geqslant8$ and its weighted proximity graph contains
\[
\scalebox{0.5}{%
\begin{tikzpicture}[->,>=stealth',shorten >=2pt,auto,node distance=2.5cm,ultra thick,baseline=-1.25ex]
  \tikzstyle{every state}=[fill=white,draw=black,text=black]

  \node[state,draw=red,text=red,label=below: $p''_0$] (F) {\huge 2};
  \node[state,label=below: $p''_1$] (G) [right of=F] {\huge 1};
  \node[state,label=below: $p''_2$] (H) [right of=G] {\huge 1};
  \node[state,label=below: $p''_3$] (I) [right of=H] {\huge 1};
\path (H) edge (G);
\path (I) edge (H);
\path (G) edge (F);
\path (H) edge [bend right,dashed] (F);
\end{tikzpicture}%
}
\]
where $p''_2$ proximates $p''_0$ only if $k=1$, a contradiction with Lemma \ref{bounded_degree} $(iii)$.

In the former case, we have two sub-cases:
\begin{enumerate}[$(a)$]
  \item the third base point of $\rho_2$ is either $p'_1$ or $p'_2$; 
  \item the third base point of $\rho_2$ is neither $p'_1$ nor $p'_2$.
\end{enumerate}

In sub-case $(a)$, the enriched weighted proximity graph of the map $\varphi_{k.1}\circ\rho_1^{-1}\circ\rho_2^{-1}$ is type $m.1$ in Table \ref{table4.3}, where if $k=1$ then $m=10$ and if $k=3$ then $m=12$ and by Lemma \ref{oq sharp_m.1}, $\oq(\varphi_{k.1}\circ\rho_1^{-1}\circ\rho_2^{-1}) =5$, that is a contradiction.

In sub-case $(b)$, the map $\varphi_{k.1}\circ\rho_1^{-1}\circ\rho_2^{-1}$ has degree $6$ and its enriched weighted proximity graph is
\begin{center}
\scalebox{0.5}{%
\begin{tikzpicture}[->,>=stealth',shorten >=2pt,auto,node distance=2.5cm,ultra thick,baseline=-1.25ex]
  \tikzstyle{every state}=[fill=white,draw=black,text=black]

  \node[state,draw=red,text=red,label=below: $p''_0$] (A)              {\huge 4};
  \node[state,draw=red,text=red,label=below: $p''_1$] (B) [right of=A] {\huge 2};
  \node[state,draw=red,text=red,label=below: $p''_2$] (C) [right of=B] {\huge 2};
  \node[state,draw=red,text=red,label=below: $p''_3$] (D) [right of=C] {\huge 2};
  \node[state,draw=red,text=red,label=below: $p''_4$] (E) [right of=D] {\huge 2};
  \node[state,label=below: $p''_5$] (F) [right of=E] {\huge 1};
  \node[state,label=below: $p''_6$] (G) [right of=F] {\huge 1};
  \node[state,label=below: $p''_7$] (H) [right of=G] {\huge 1};
\path (G) edge[bend right,dashed] (E);
\path (H) edge (G);
\path (G) edge (F);
\path (F) edge (E);
\end{tikzpicture}%
}
\end{center}
where $p''_1, p''_2, p''_3$ are aligned and $p''_6$ proximates $p''_4$ only if $k=1$. Then, there should exist an ordinary quadratic map $\rho_3$ such that $\oq(\varphi_{k.1}\circ\rho_1^{-1}\circ\rho_2^{-1}\circ\rho_3^{-1}) = 3$. The map $\rho_3$ must be based at $p''_4$ and not at a point lying on the line passing through $p''_4$ and $p''_5$, otherwise the maximum height of the base points of the map $\varphi_{k.1}\circ\rho_1^{-1}\circ\rho_2^{-1}\circ\rho_3^{-1}$ is 4, a contradiction with Proposition \ref{oq>height}.

If $\rho_3$ is not based at $p''_0$, then the map $\varphi_{k.1}\circ\rho_1^{-1}\circ\rho_2^{-1}\circ\rho_3^{-1}$ has degree $\geqslant6$ and its weighted proximity graph contains
\[
\scalebox{0.5}{%
\begin{tikzpicture}[->,>=stealth',shorten >=2pt,auto,node distance=2.5cm,ultra thick,baseline=-1.25ex]
  \tikzstyle{every state}=[fill=white,draw=black,text=black]

  \node[state,draw=red,text=red] (F) [right of=E] {\huge 1};
  \node[state] (G) [right of=F] {\huge 1};
  \node[state] (H) [right of=G] {\huge 1};
\path (H) edge (G);
\path (G) edge (F);
\end{tikzpicture}%
}
\]
a contradiction with Lemma \ref{bounded_degree} $(ii)$.

It remains the case that $\rho_3$ is based at $p''_4$ and $p''_0$.
If the third base point of $\rho_3$ is not $p''_i$, $i \in \{1,2,3\}$, then the map $\varphi_{k.1}\circ\rho_1^{-1}\circ\rho_2^{-1}\circ\rho_3^{-1}$ has degree 6 and its weighted proximity graph again contains
\[
\scalebox{0.5}{%
\begin{tikzpicture}[->,>=stealth',shorten >=2pt,auto,node distance=2.5cm,ultra thick,baseline=-1.25ex]
  \tikzstyle{every state}=[fill=white,draw=black,text=black]

  \node[state,draw=red,text=red] (F) [right of=E] {\huge 1};
  \node[state] (G) [right of=F] {\huge 1};
  \node[state] (H) [right of=G] {\huge 1};
\path (H) edge (G);
\path (G) edge (F);
\end{tikzpicture}%
}
\]
that is a contradiction with Lemma \ref{bounded_degree} $(ii)$.

Otherwise, if the third base point of $\rho_3$ is $p''_i$, $i \in \{1,2,3\}$, then the map $\varphi_{k.1}\circ\rho_1^{-1}\circ\rho_2^{-1}\circ\rho_3^{-1}$ has degree 4 and its enriched weighted proximity graph is type $37.n$ in our Table \ref{table4.3}, where $n=2$  if $k=1$ and $n=1$ if $k=3$. By Lemma \ref{oq sharp_37.n}, we have $\oq(\varphi_{k.1}\circ\rho_1^{-1}\circ\rho_2^{-1}\circ\rho_3^{-1}) = 4$, a contradiction.

Overall, we conclude that $\oq(\varphi_{k.1}) =7$ where $k=1,3$.
\end{proof}

\begin{remark}
Type $1.1$, Type $3.1$, and its inverse Type $16.2$ are the only three of the 123 types that have the largest ordinary quadratic length, equal to 7. The ordinary quadratic lengths of the remaining types can be found in detail in Table \ref{table4.3} .
\end{remark}

\subsection*{Acknowledgements}
This work was completed while the second author was a postdoctoral researcher at Vietnam Institute for Advanced Studies in Mathematics (VIASM) and Tokyo University of Science (TUS). She sincerely thanks VIASM and TUS for their very kind support and hospitality.
The first author is supported by the Departmental Research Funding [FIRD] from the Department of Mathematics and Computer Science of the University of Ferrara.
The second author is supported by Project for Mathematical Science and Cooperation with Engineering (MaSCE) at TUS.

\end{document}